\newtheorem{theo}{Theorem}
\newtheorem{cor}{Corollary}
\newtheorem{lem}{Lemma}
\newtheorem{defn}{Definition}
\newtheorem{prop}{Proposition}
\theoremstyle{remark}
\newenvironment{rmk}[1]{{\noindent \bf Remark.} {\it #1}}
\providecommand{\sca}[1]{\langle #1 \rangle}
\providecommand{\e}{\mathrm e}
\newcommand{\ps}{\psi^{\star}_{(s)}}
\def\R{{\mathbb R}}
\def\grad{\nabla}
\def\qed{\hfill $\vcenter{\hrule height .3mm
\hbox {\vrule width .3mm height 2.1mm \kern 2mm \vrule width .3mm
height 2.1mm} \hrule height .3mm}$ \bigskip}
\def\lam{\lambda}
\def\to{\rightarrow}
\def\pmx{\begin{pmatrix}}
\def\emx{\end{pmatrix}}
\def\Hess{\nabla^2}
\def\det{{\rm det}}
\def\R{\mathbb R}
\begin{document}

\title{Functional versions of $L_p$-affine surface area and entropy inequalities.
\footnote{Keywords: entropy, affine isoperimetric inequalities, log-Sobolev inequalities. 
2010 Mathematics Subject Classification: 52A20.}}
\author{U. Caglar, M. Fradelizi\thanks{partially supported by the project GeMeCoD ANR 2011 BS01 007 01}, O. Gu\'edon\footnotemark[2], J. Lehec, \\
C.  Sch\"utt  and E.  M. Werner 
\thanks{Partially supported by an NSF grant}}

\date{}

\maketitle

\begin{abstract}
In contemporary convex geometry, the rapidly developing $L_p$-Brunn Minkowski theory is a modern analogue of the classical Brunn Minkowski theory.
A cornerstone of this theory is the $L_p$-affine surface area for convex bodies.
Here, we introduce  a functional form of this concept,  for log concave and $s$-concave functions. 
We show that the new  functional form is a generalization of the original $L_p$-affine surface area.
We prove duality relations and affine isoperimetric inequalities for log concave and $s$-concave functions. 
This leads to a new inverse log-Sobolev inequality for $s$-concave densities.

\end{abstract}

\section {Introduction.}

The starting point of this paper is a reverse log-Sobolev inequality for log concave functions due to Artstein, Klartag, Sch\"utt and Werner~\cite{ArtKlarSchuWer}. We first recall the usual log-Sobolev inequality.
Let $\gamma_n$ be the standard Gaussian 
measure on $\R^n$. The log-Sobolev inequality, due to Gross~\cite{Gross} (see also \cite{Federbush, Stam}),
asserts that for every probability measure $\mu$ on $\R^n$
\[ 
H \left( \mu \mid \gamma_n \right) 
  \leq  \frac 1 2  I \left( \mu \mid \gamma_n \right) ,
\]
where $H$ and $I$ denote the relative entropy and Fisher information, respectively, 
\[
\begin{split}
H( \mu \mid \gamma_n ) =  \int_{\R^n} \log \left( \frac{d\mu}{d\gamma_n} \right) \, d \mu,  \hskip 4mm 
I ( \mu \mid \gamma_n )  =
 \int_{\R^n} \left| \nabla \log\left( \frac{d\mu}{d\gamma_n} \right) \right|^2 \, d \mu 
 \end{split}
 \]
and $| \cdot |$ is the Euclidean norm.
It is well known (see for instance~\cite{BakryLedoux}) that this inequality can be slightly improved to
 \begin{equation}
 \label{log-sob-mieux}
 H ( \mu \mid \gamma_n ) 
 \leq \frac {C(\mu)} 2  + \frac n 2  \log \Bigl( 1 + \frac { I ( \mu \mid \gamma_n ) - C(\mu) }{n} \Bigr) ,
 \end{equation}
where
\[
C ( \mu) =   \int_{\R^n} \vert x\vert^2 \, d\mu- n 
\]
is the gap between the second moment of $\mu$ and that of the Gaussian. 
The usual log-Sobolev inequality is recovered using the inequality $\log ( 1+x ) \leq x $. 
Inequality~\eqref{log-sob-mieux} can be written in a more concise way. 
Put $\psi = - \log ( d \mu / dx )$ and let 
\[
 S ( \mu ) =  \int_{\R^n} \psi \, d\mu  = -  H ( \mu \mid dx )=-H ( \mu \mid \gamma_n ) +\frac{C(\mu)}{2}+\frac{n}{2}\log(2\pi \e)
\]
be the Shannon entropy of $\mu$. Then $S(\gamma_n)=\frac{n}{2}\log(2\pi \e)$ so that 
\[
H ( \mu \mid \gamma_n ) -\frac{C(\mu)}{2}=S (  \gamma_n ) - S ( \mu ).
\]
Moreover one has
\[
I ( \mu \mid \gamma_n ) =\int |x-\nabla\psi(x)|^2d\mu=C(\mu)+n+\int\left(|\nabla\psi(x)|^2-2\langle x,\nabla\psi(x)\rangle\right)d\mu.
\]
Hence  inequality~\eqref{log-sob-mieux} is equivalent to
\[
2\  \Big(  S (  \gamma_n ) - S ( \mu ) \Big)
\leq   n  \log \left(  \frac { 2n- 2\int\langle x,\nabla\psi(x)\rangle d\mu+ \int|\nabla\psi(x)|^2d\mu}{n} \right) .
\]
If $\e^{-\psi}$ is $C^2$ on $\R^n$,  then 
$\int\langle x,\nabla\psi(x)\rangle d\mu=n$ and $\int|\nabla\psi(x)|^2d\mu=\int \Delta \psi \, d\mu$ so that  
 inequality~\eqref{log-sob-mieux} is equivalent to
 \[
2\  \Big(  S (  \gamma_n ) - S ( \mu ) \Big)
\leq   n  \log \left(  \frac {  \int_{\R^n} \Delta \psi \, d\mu } n \right) ,
\]
where $\Delta$ is the Laplacian. 

Recall that a measure $\mu$ with density $e^{-\psi}$ with respect to the Lebesgue measure
is called log-concave if 
$\psi \colon \R^n \to \R \cup\{+\infty\}$ is a convex function.
For such log-concave measures
the following reversed form of the 
previous inequality holds.  There, $\nabla^2 \psi$ denotes the Hessian of $\psi$.

\begin{theo}
\label{rev-log-sob}
Let $\mu$ be a log-concave probability measure on $\R^n$, with density $\e^{-\psi}$ with respect to the Lebesgue measure. 
Then 
\[
\int_{\R^n} \log \bigl( \det ( \nabla^2 \psi) \bigr) \, d\mu 
\leq 2\  \Big(  S (  \gamma_n ) - S ( \mu ) \Big) . 
\]
Equality holds if and only if $\mu$ is Gaussian (with any mean and 
any positive definite covariance matrix).
\end{theo}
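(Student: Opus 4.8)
The plan is to derive the inequality from the functional Blaschke--Santal\'o inequality, combined with Jensen's inequality and the change of variables furnished by the Legendre transform of $\psi$. Throughout, write $L=\int_{\R^n}\log\det(\nabla^2\psi)\,d\mu$ for the left-hand side and recall from the definitions above that $S(\mu)=\int_{\R^n}\psi\,d\mu$ and $2S(\gamma_n)=n\log(2\pi\e)$. Since $\nabla^2\psi$ and the integrals $\int\log\det(\nabla^2\psi)\,d\mu$ and $\int\psi\,d\mu$ are unaffected when $\mu$ is translated, I may assume the barycenter of $\mu$ is the origin. I may also assume $\det(\nabla^2\psi)>0$ $\mu$-a.e., for otherwise $L=-\infty$ and there is nothing to prove; here $\nabla^2\psi$ is the Alexandrov Hessian of the convex function $\psi$, which exists almost everywhere.

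First I would introduce $Z=\int_{\R^n}\e^{-\psi^*}\,dx$, where $\psi^*$ denotes the Legendre transform of $\psi$, and bound $Z$ from below. The change of variables $y=\nabla\psi(x)$, in the one-sided form $\int_{\R^n}h(\nabla\psi(x))\det(\nabla^2\psi(x))\,dx\le\int_{\R^n}h(y)\,dy$ valid for the gradient of any convex function and any Borel $h\ge0$, together with the equality case of the Fenchel--Young inequality $\psi^*(\nabla\psi(x))=\langle x,\nabla\psi(x)\rangle-\psi(x)$, gives
\[
Z\ \ge\ \int_{\R^n}\e^{-\psi^*(\nabla\psi(x))}\det(\nabla^2\psi(x))\,dx
\ =\ \int_{\R^n}\e^{\,2\psi(x)-\langle x,\nabla\psi(x)\rangle+\log\det(\nabla^2\psi(x))}\,d\mu(x),
\]
where in the last equality I used $\det(\nabla^2\psi(x))\,dx=\e^{\,\psi(x)+\log\det(\nabla^2\psi(x))}\,d\mu(x)$. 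By Jensen's inequality applied to the exponential and the probability measure $\mu$, the right-hand side is at least $\exp\bigl(2S(\mu)-\int_{\R^n}\langle x,\nabla\psi\rangle\,d\mu+L\bigr)$, and an integration by parts --- legitimate since a log-concave probability density decays exponentially --- gives $\int_{\R^n}\langle x,\nabla\psi(x)\rangle\,d\mu=n$. Hence
\[
Z\ \ge\ \exp\bigl(2S(\mu)-n+L\bigr).
\]

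Next I would invoke the functional Blaschke--Santal\'o inequality: since $\mu=\e^{-\psi}\,dx$ is a probability measure with barycenter at the origin, $Z=\int_{\R^n}\e^{-\psi^*}\,dx\le(2\pi)^n$. Combined with the last display this gives $2S(\mu)-n+L\le n\log(2\pi)$, that is
\[
L\ \le\ n\log(2\pi\e)-2S(\mu)\ =\ 2\bigl(S(\gamma_n)-S(\mu)\bigr),
\]
which is the asserted inequality. For the equality case, equality forces equality in both Jensen's inequality and the functional Santal\'o inequality; the equality case of the latter (Fradelizi--Meyer) says $\e^{-\psi}$ is a Gaussian density, so $\mu$ is Gaussian. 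Conversely, if $\psi(x)=\frac12\langle Ax,x\rangle+c$ with $A$ positive definite, then $2\psi(x)-\langle x,\nabla\psi(x)\rangle+\log\det(\nabla^2\psi(x))=2c+\log\det A$ is constant, so Jensen is an equality, $\int_{\R^n}\e^{-\psi^*}$ equals exactly $(2\pi)^n$, and all the inequalities above become equalities; undoing the translation allows an arbitrary mean.

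The step I expect to require the most care is the regularity: for a general log-concave $\mu$ one has only the Alexandrov Hessian, the Monge--Amp\`ere change of variables holds only in the one-sided form used above, and one must also check that $\log\det(\nabla^2\psi)$ and $\langle x,\nabla\psi\rangle$ are $\mu$-integrable (or conclude $L=-\infty$ outright). I would handle this by first establishing the inequality for smooth, strongly convex $\psi$ with $\e^{-\psi}$ rapidly decaying and then approximating. The other essential ingredient, the sharp functional Blaschke--Santal\'o inequality with its equality case, is a known result which I would cite.
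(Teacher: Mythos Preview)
Your proposal is correct and follows essentially the same route as the paper: reduce to barycenter at the origin, bound $\int e^{-\psi^*}$ above by $(2\pi)^n$ via the functional Blaschke--Santal\'o inequality, bound it below via the Legendre change of variables and Jensen, and conclude using the equality case of Santal\'o. One point of care: in general you only have $\int_{\R^n}\langle x,\nabla\psi\rangle\,d\mu\le n$, not equality --- the paper obtains this via Gauss--Green on $\Omega_\psi$ and a nonnegative boundary term (think of $\psi$ finite on a bounded convex domain and $+\infty$ outside, where $e^{-\psi}$ need not vanish on $\partial\Omega_\psi$); your justification ``log-concave densities decay exponentially'' covers only the full-support case, but since the inequality $\le n$ is all that is needed, the argument goes through unchanged.
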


The  inequality  of Theorem \ref{rev-log-sob} is due to Artstein, Klartag, Sch\"utt and Werner~\cite{ArtKlarSchuWer}, apart from 
the equality case which was left open and smoothness hypotheses which we removed. Their proof is based on affine isoperimetric inequalities
and is pretty technical. 
\par
It is one aim of the present  article  to give a simple and short proof of this theorem including the characterization of equality,  based
on the functional form of the Blaschke-Santal\'o inequality. 
\par
This new approach can be  extended  to a  more general scheme which we  develop in subsequent sections. 
In particular, it  leads to  the definition of {\em functional $L_p$-affine surface area}.  In Theorem \ref{generalduality} and Corollary \ref{funcsurfineq}, we establish, for log concave functions,  their corresponding duality relation and $L_p$-affine isoperimetric inequalities. Those are the counterparts to  the ones that hold for convex bodies. In fact, we show that the $L_p$-affine isoperimetric inequalities for convex bodies can be obtained from the ones for log concave functions. This is explained in section \ref{sec:convexbodies}..
\par
Finally, we generalize the notion of $L_p$-affine isoperimetric surface area to $s$-concave functions for $s>0$. We establish in Theorem \ref{s-generalduality} a duality relation which enables to prove the corresponding $L_p$-affine inequalities and the reverse log-Sobolev inequality for $s$-concave functions.

\subsection{Notations}
For a convex function $\psi: \R^n \to \R \cup \{+\infty\}$, we define $\Omega_\psi$ to be the interior of the convex domain of $\psi$, $\{x \in \R^n, \psi(x) < +\infty\}$. We always consider in this paper convex functions $\psi$ such that $\Omega_\psi \ne \emptyset$. We will use the classical Legendre transform of $\psi$, 
\begin{equation}
\label{Legendre}
\psi^* ( y ) = \sup_x \bigl( \sca{x,y} - \psi (x) \bigr) .
\end{equation} 
In the general case, when $\psi$ is neither  smooth nor strictly convex,
the gradient of $\psi$, denoted  by $\nabla \psi$, exists almost everywhere by Rademacher's theorem (e.g., \cite{Rademacher}),   and a theorem of Alexandrov \cite{Alexandroff} and Busemann and Feller \cite{Buse-Feller} guarantees the existence of its Hessian, denoted  
$\nabla^2 \psi$, almost everywhere in $\Omega_\psi$. We let $X_\psi$  be the set of points of $\Omega_\psi$ at which its Hessian $\nabla^2\psi$ in the sense of Alexandrov exists and is invertible.
Recall also  that 
\[
\psi ( x) + \psi^* (y ) \geq \sca{x,y} 
\] 
for every $x,y \in \R^n$, with equality if and only if
 $x$ is in the domain of $\psi$ and $y\in\partial\psi(x)$, the sub differential 
 of $\psi$ at $x$. In particular
\begin{equation}\label{legendreequality}
 \psi^* ( \nabla \psi (x) ) = \langle x ,\nabla \psi (x) \rangle - \psi (x)  ,\quad \rm{a.e.\ in}\ \Omega_\psi. 
\end{equation}
References about duality of convex functions are \cite{mccann, Rockafellar, SchneiderBook}. We will denote by $|x|$ the Euclidean norm of a vector $x$ in $\R^n.$

\section{A  short proof of the  reverse log-Sobolev inequality}
\label{sec:shortproof}

Let us first recall the form of the functional Blaschke-Santal\'o inequality  \cite{ArtKlarMil, KBallthesis, FradeliziMeyer2007,Lehec2009} that we need.  Let $f,g$ be non-negative integrable 
functions on $\R^n$ satisfying
\[
f ( x ) g ( y ) \leq \e^{- \sca{x,y}} , \quad \forall x,y \in \R^n . 
\]
If $f$ has its barycenter at $0$, which means that $\int xf(x)dx=0$, then
\[
\Bigl( \int_{\R^n} f \, dx \Bigr) \times \Bigl( \int_{\R^n} g \, dx \Bigr) \leq ( 2 \pi )^n . 
\]
There is equality if and only if there exists a positive definite matrix $A$ and $C>0$ such that, a.e. in $\R^n$,
\[
f (x) = C\,  \e^{ - \sca{ A x , x } / 2  } , \quad g(y) = \frac {  \e^{ - \sca{ A^{-1} y , y } / 2 }  } C.
\]
\begin{proof}[Proof of Theorem~\ref{rev-log-sob}]
Without loss of generality, we may assume that the function $\psi$ is lower semi-continuous. Both terms of the inequality are invariant under translations of
the measure $\mu$, so we can assume
that $\mu$ has its barycenter at $0$. 
Then by the functional Santal\'o inequality above
\begin{equation}
\label{step-rev-log-sob}
 \int_{\R^n} \e^{-\psi^* } \, dx \leq (2\pi)^n.
\end{equation}
Let $\Omega_\psi, \Omega_{\psi^*}$ be the interiors of the domains of
$\psi$ and $\psi^*$, respectively. 
If $\psi$ is $\mathcal C^2$-smooth and strictly convex
then the map 
$\nabla \psi \colon \Omega_\psi \to \Omega_{\psi^*}$ is smooth
and bijective. So by the change of variable formula, 
\begin{equation}\label{variablechange1}
\int_{\R^n} e^{-\psi^*(y)} \, dy 
= \int_{\Omega_{\psi^*}} e^{-\psi^*(y)} \, dy  
= \int_{\Omega_\psi} e^{-\psi^*(\nabla \psi (x))} \det ( \nabla^2 \psi (x) ) \, dx . 
\end{equation}
As noted above, in the general case, 
Rademacher's theorem still guarantees the existence of the gradient $\nabla \psi$ of $\psi$ and a theorem of Alexandrov and Busemann and Feller the existence of  its Hessian $\nabla^2 \psi$, almost everywhere 
in $\Omega$, so that both terms of equality~\eqref{variablechange1} make sense.
Although it is clear (take $\psi (x) = \vert x \vert$ in $\R$) that this equality may fail in general, 
a result of McCann~\cite[Corollary~4.3 and Proposition A.1]{mccann} shows that 
\begin{equation}\label{variablechangemccann}
\int_{\Omega_\psi} \e^{-\psi^*(\nabla \psi (x))} \det ( \nabla^2 \psi (x) )\, dx= \int_{X_{\psi^*}}\e^{-\psi^*(y)} \, dy,
\end{equation}
where $X_{\psi^*}$ is the set of vectors of $\Omega_{\psi^*}$ at which $\nabla^2\psi^*$ exists and is invertible.
Together with~\eqref{step-rev-log-sob} we get
\[
 \int_{\Omega_\psi} \e^{- \psi^* ( \nabla \psi (x) ) } \det ( \nabla^2 \psi (x) ) \, dx  \leq (2\pi)^n .
\]
With \eqref{legendreequality}, the previous inequality thus becomes
\[
 \int_{\Omega_\psi} \e^{- \sca{x,\nabla \psi (x) } +  \psi (x) } 
\det ( \nabla^2 \psi (x) ) \, dx  \leq (2\pi)^n  , 
\]
which can be rewritten as
\begin{equation}
\label{variablechange}
 \int_{\R^n} \e^{- \sca{x,\nabla \psi (x) } + 2  \psi (x) } 
\det ( \nabla^2 \psi (x) ) \, d\mu  \leq (2\pi)^n  . 
\end{equation}
Taking the logarithm and using Jensen's inequality (recall that $\mu$
is assumed to be a probability measure) we obtain
\[
-  \int_{\R^n} \sca{x,\nabla \psi (x) } \, d\mu   + 2 S ( \mu ) +
\int_{\R^n} \log\bigl( \det ( \nabla^2 \psi ) \bigr)  \,  d\mu   \leq n \log (2\pi)  .
\]
We will need some version of the Gauss-Green (or Stokes) formula and refer to \cite{ChenTorresZiemer} for general references and recent results on this subject. 
Let $v$ be the  vector flow $v(x) = e^{-\psi(x)} x$. By convexity and lower semi-continuity of $\psi$, it is continuous and locally Lispchitz on 
$\overline{\Omega_\psi}$. Assume first that $\Omega_\psi$ is bounded.  Then by the Gauss-Green formula  \cite{DeGiorgi, Federer}, we have 
\[
\int_{\Omega_\psi}  \mathrm{div}( v(x)) dx = \int_{\partial \Omega_\psi}\langle v(x), N_{\Omega_\psi}(x)\rangle d\sigma_{\Omega_\psi},
\]
where $N_{\Omega_\psi}(x)$ is an exterior normal to the convex set $\Omega_\psi$ at the point $x$ and $\sigma_{\Omega_\psi}$ is the surface area measure on $\partial \Omega_\psi$. Hence 
\begin{align*}
\int_{\R^n} \sca{x,\nabla \psi (x) } \, d\mu  & = \int_{\Omega_\psi} \sca{x,\nabla \psi (x) } e^{-\psi(x)}  dx 
\\
& = \int_{\Omega_\psi}  \mathrm{div} ( x ) e^{-\psi(x)} dx  - \int_{\partial \Omega_\psi}\langle x, N_{\Omega_\psi}(x)\rangle \e^{-\psi(x)}d\sigma_{\Omega_\psi}.
\end{align*}
This formula holds true for unbounded domain $\Omega_\psi$  by a simple truncation argument and by the fast decay of $\log$-concave integrable functions.  Since $\Omega_\psi$ is convex, the barycenter $0$ of $\mu$ is in $\Omega_\psi$. Thus $\langle x, N_{\Omega_\psi}(x)\rangle\ge 0$ for every $x\in\partial\Omega_\psi$ and $ \mathrm{div}(x)=n$ hence 
\[
\int_{\R^n} \sca{x,\nabla \psi (x) } \, d\mu \le n.
\]
This finishes the proof of the inequality.
Let us move on to the equality case. 
It is easily checked that there is equality 
 in Theorem~\ref{rev-log-sob} for Gaussian measures. 
On the other hand, the above proof shows that if $\mu$ 
satisfies the equality case, 
then there must be equality in~\eqref{step-rev-log-sob}. Then,
by the equality case of the functional Santal\'o inequality, $\mu$ is Gaussian.
\end{proof}

\section{A functional $L_p$-affine surface area.}

\subsection{General theorems.}
We first present a definition  that generalizes the notion of $L_p$-affine surface area of  convex bodies  to a  functional setting.
Generalizations of a different nature were given in \cite{CaglarWerner} and \cite{ CaglarWerner2}.
\begin{defn}
\label{def:log}
For $F_1, F_2 \colon \R\to (0,+\infty)$ and  $\lambda\in\R$, we define
\begin{equation}\label{general}
as_{\lambda}(F_1, F_2, \psi) = \int_{X_\psi} \Big(F_1 (\psi(x))\Big)^{1-\lambda} \Big(F_2(\langle x, \nabla \psi(x)\rangle - \psi(x))\Big)^{\lambda} 
\Big(\det \,  \nabla^2  \psi(x)\Big)^{\lambda} dx.
\end{equation}
\end{defn}
Since $\det (\nabla^2 \psi(x))=0$ outside $X_\psi$, the integral may be taken on $\Omega_\psi$ for $\lambda>0$.
Definition \ref{def:log} is motivated by two important facts. Firstly, we can prove that  for a particular choice of $F_1$, $F_2$ and $\psi$  it fits with the usual $L_p$-affine surface area of a convex body. This is the content of Theorem \ref{norm}. Secondly, in the  case of log-concave functions,  for $F_1(t)=F_2(t)=e^{-t}$ the functional affine surface area $as_1(F_1,F_2, \psi)$  becomes 
\[
as_1(F_1, F_2, \psi) = \int_{X_\psi} e^{-\psi^*(\nabla \psi(x)} \det \,  \nabla^2  \psi(x) dx = \int_{\Omega_\psi} e^{-\psi^*(\nabla \psi(x)} \det \,  \nabla^2  \psi(x) dx
\]
and is of particular interest. This is illustrated in subsection \ref{application}.
\par
Our main result is the duality formula of Theorem \ref{generalduality}. A special case is the identity \eqref{variablechangemccann} which was the starting point of the  short proof of the reverse log-Sobolev inequality presented in the Section \ref{sec:shortproof}. 
\par
Notice  also that for any linear invertible map $A$ on $\R^n$, one has 
\begin{equation}\label{linearmap}
as_\lambda(F_1,F_2,\psi \circ A) = |\det A|^{2\lambda-1}as_\lambda (F_1,F_2,\psi),
\end{equation}
which corresponds to an $SL(n)$ invariance with a homogeneity of degree $(2 \lambda -1)$.
This  is easily checked using that $\nabla_x(\psi\circ A)=A^t\nabla_{Ax}\psi$ and $\Hess_x (\psi\circ A)=A^t\Hess_{Ax}\psi A$.

We shall use  Corollary 4.3 and Proposition A.1 of \cite{mccann}, where McCann showed a  general  change of variable formula,  namely  for every Borel function $f : \R^n \to \R_+$, 
\begin{equation}
\label{mccannbis}
\int_{X_\psi} f(\nabla \psi (x)) \det \Hess \psi(x) dx = \int_{X_{\psi^*}} f(y) dy.
\end{equation}
The same holds true for every integrable function $f: \R^n \to \R$. Identity (\ref{mccannbis})  is obvious when $\psi$ satisfies some regularity assumptions, like $C^2$. It suffices to make the change of variable $y = \nabla \psi(x)$. The proofs are however more delicate in a general setting. 
\par
We establish the following duality relation.

\begin{theo}\label{generalduality}
Let $\lambda\in\R$, let $F_1, F_2\colon \R\to \R_+$ and let $\psi : \R^n \to \R \cup \{+\infty\}$ be convex. If $\lambda < 0$ or $\lambda > 1$, assume moreover that $F_1 \circ \psi > 0$ on $X_\psi$ and $F_2 \circ \psi^* > 0$ on $X_{\psi^*}$.
Then
\[
as_{\lambda}(F_1, F_2, \psi)  = as_{1-\lambda}(F_2, F_1, \psi^*).
\]
\end{theo}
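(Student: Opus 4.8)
The plan is to write the integral defining $as_\lambda(F_1,F_2,\psi)$ as the integral of a Borel function composed with $\nabla\psi$, multiplied by $\det\Hess\psi$, and then apply McCann's change of variable formula \eqref{mccannbis} to transport everything to the $\psi^*$ side. First I would rewrite the integrand of \eqref{general}, for $x\in X_\psi$, using the Legendre equality \eqref{legendreequality}: on $X_\psi$ one has $\langle x,\nabla\psi(x)\rangle-\psi(x)=\psi^*(\nabla\psi(x))$, and likewise $\psi(x)=\psi^{**}(x)=\langle x,\nabla\psi(x)\rangle-\psi^*(\nabla\psi(x))$ (using $\psi$ lower semi-continuous, which we may assume, so $\psi^{**}=\psi$). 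Thus, writing $y=\nabla\psi(x)$, the first two factors become $\bigl(F_1(\langle x,y\rangle-\psi^*(y))\bigr)^{1-\lambda}\bigl(F_2(\psi^*(y))\bigr)^{\lambda}$. To make this a genuine function of $y$ alone I would use the classical fact that on $X_\psi$ the map $\nabla\psi$ has an inverse given by $\nabla\psi^*$, so that $x=\nabla\psi^*(y)$ for the relevant $y$; hence $\langle x,y\rangle-\psi^*(y)=\langle \nabla\psi^*(y),y\rangle-\psi^*(y)=\psi^{**}(\nabla\psi^*(y))=\psi(\nabla\psi^*(y))$. So the integrand equals $g(\nabla\psi(x))\,\bigl(\det\Hess\psi(x)\bigr)^{\lambda}$ with
\[
g(y)=\bigl(F_1(\psi(\nabla\psi^*(y)))\bigr)^{1-\lambda}\bigl(F_2(\psi^*(y))\bigr)^{\lambda}.
\]

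Next I would absorb the extra $\lambda-1$ power of the determinant. Since $\nabla\psi$ restricted to $X_\psi$ maps (up to measure zero) onto $X_{\psi^*}$ with inverse $\nabla\psi^*$, and the two Hessians are mutually inverse along this correspondence, one has $\det\Hess\psi(x)=1/\det\Hess\psi^*(\nabla\psi(x))$ on (a full-measure subset of) $X_\psi$. Therefore
\[
\bigl(\det\Hess\psi(x)\bigr)^{\lambda}=\bigl(\det\Hess\psi(x)\bigr)\cdot\bigl(\det\Hess\psi(x)\bigr)^{\lambda-1}
= \bigl(\det\Hess\psi(x)\bigr)\cdot\bigl(\det\Hess\psi^*(\nabla\psi(x))\bigr)^{1-\lambda},
\]
so the whole integrand is $h(\nabla\psi(x))\,\det\Hess\psi(x)$ with $h(y)=g(y)\,\bigl(\det\Hess\psi^*(y)\bigr)^{1-\lambda}$, a Borel function on $X_{\psi^*}$ (nonnegative, and under the stated hypothesis when $\lambda<0$ or $\lambda>1$ the bases $F_1\circ\psi$ and $F_2\circ\psi^*$ are positive so the fractional powers are well defined; for $0\le\lambda\le1$ the powers cause no trouble even where the base vanishes). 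Applying \eqref{mccannbis} gives
\[
as_\lambda(F_1,F_2,\psi)=\int_{X_\psi} h(\nabla\psi(x))\det\Hess\psi(x)\,dx=\int_{X_{\psi^*}} h(y)\,dy,
\]
and unwinding the definition of $h$, together with $\psi^{**}=\psi$ and the Legendre equality for $\psi^*$ (i.e.\ $\langle y,\nabla\psi^*(y)\rangle-\psi^*(y)=\psi(\nabla\psi^*(y))$ a.e.\ on $\Omega_{\psi^*}$), one recognizes exactly $\int_{X_{\psi^*}}\bigl(F_2(\psi^*(y))\bigr)^{1-(1-\lambda)}\bigl(F_1(\langle y,\nabla\psi^*(y)\rangle-\psi^*(y))\bigr)^{1-\lambda}\bigl(\det\Hess\psi^*(y)\bigr)^{1-\lambda}dy = as_{1-\lambda}(F_2,F_1,\psi^*)$.

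The main obstacle is the bookkeeping of null sets and the justification that $\nabla\psi|_{X_\psi}$ and $\nabla\psi^*|_{X_{\psi^*}}$ are mutually inverse up to Lebesgue-null sets, with the chain-rule identity $\Hess\psi^*(\nabla\psi(x))=(\Hess\psi(x))^{-1}$ holding a.e.\ on $X_\psi$; this is precisely the content of McCann's Corollary 4.3 and Proposition A.1, so I would invoke that rather than reprove it. A secondary point is the reduction to $\psi$ lower semi-continuous so that $\psi^{**}=\psi$ and the involutivity of the Legendre transform hold, and the remark that replacing $\psi$ by its lower semi-continuous hull changes neither side (both integrals are over interiors of domains and over sets where the Hessian exists, which are unaffected). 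Once these measure-theoretic facts are in hand, the computation above is a direct substitution, and the symmetry $\lambda\leftrightarrow1-\lambda$, $F_1\leftrightarrow F_2$, $\psi\leftrightarrow\psi^*$ is manifest. \qed
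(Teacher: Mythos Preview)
Your proposal is correct and follows essentially the same route as the paper's proof: reduce to $\psi$ lower semi-continuous, use the Legendre identity \eqref{legendreequality} to write the integrand in terms of $\nabla\psi$, invoke McCann's Proposition~A.1 for $x=\nabla\psi^*\!\circ\nabla\psi(x)$ and $\Hess\psi^*(\nabla\psi(x))=(\Hess\psi(x))^{-1}$ on $X_\psi$, and then apply the change-of-variable formula \eqref{mccannbis}. The only cosmetic difference is that the paper substitutes $x=\nabla\psi^*(\nabla\psi(x))$ directly before changing variables, whereas you first name the intermediate function $h$; the argument and the cited ingredients are the same.
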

\begin{proof}
Without loss of generality, we can assume that $\psi$ is lower semi-continuous so that $\psi = (\psi^*)^*$.
By (\ref{legendreequality}), 
\[
as_{\lambda}(F_1, F_2, \psi) = \int_{X_\psi} (F_1\circ\psi(x))^{1-\lambda} (F_2\circ\psi^*(\nabla \psi(x)))^{\lambda} 
(\det \,\nabla^2 \psi(x))^{\lambda} dx.
\]
By Proposition A.1 in \cite{mccann}, 
\[
x = \nabla \psi^* \circ \nabla \psi(x) 
\ \hbox{ and } \ 
\Hess \psi^*(\nabla \psi(x)) = (\Hess \psi (x))^{-1},   \quad \forall x \in X_\psi, 
\]
so that $as_{\lambda}(F_1, F_2, \psi)$ is equal to 
\[
 \int_{X_\psi} (F_1\circ\psi\circ\nabla \psi^* ( \nabla \psi (x))^{1-\lambda} (F_2\circ\psi^*(\nabla \psi(x)))^{\lambda} 
(\det \,\nabla^2 \psi^*(\nabla \psi(x)))^{1-\lambda} \det \,\nabla^2 \psi(x)dx.
\]
With  (\ref{mccannbis}), we get that 
\[
as_{\lambda}(F_1, F_2, \psi) = \int_{X_{\psi^*}} (F_1\circ\psi\circ\nabla \psi^*(y)^{1-\lambda} (F_2\circ\psi^*(y))^{\lambda} 
(\det \,\nabla^2 \psi^*(y))^{1-\lambda}  dy.
\]
We conclude the proof using (\ref{legendreequality}) with $\psi^*$ and $(\psi^*)^* = \psi$. 
\end{proof}
\begin{cor}\label{generalholder}
The function  $\lambda\mapsto \log(as_{\lambda}(F_1, F_2, \psi))$ is convex on $\R$. Moreover,  
\begin{equation*}
\forall \lambda \in [0,1],  \hskip 2mm 
 as_{\lambda}(F_1, F_2, \psi) \le \left(\int_{X_\psi} F_1 \circ \psi\right)^{1-\lambda}\left(\int_{X_{\psi^*}} F_2 \circ \psi^*\right)^{\lambda}.
\end{equation*}
Equality holds trivially if $\lam=0 $ and $ \lam=1$.
\begin{equation*}
\forall \lambda \notin [0,1],  \hskip 2mm
 as_{\lambda}(F_1, F_2, \psi)\ge \left(\int_{X_\psi} F_1\circ \psi \right)^{1-\lambda}\left(\int_{X_{\psi^*}} F_2 \circ \psi^* \right)^{\lambda}.
\end{equation*}

\end{cor}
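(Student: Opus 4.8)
The plan is to deduce everything from the duality relation of Theorem~\ref{generalduality} together with Hölder's inequality applied to the integral defining $as_\lambda$. First I would observe that the integrand in \eqref{general} can be written, using \eqref{legendreequality}, in the form $(F_1\circ\psi)^{1-\lambda}\cdot\big[(F_2\circ\psi^*\circ\nabla\psi)\,\det\nabla^2\psi\big]^{\lambda}$ on $X_\psi$; the key point is that $as_\lambda$ is thus an integral of a product of a $(1-\lambda)$-power and a $\lambda$-power of two \emph{fixed} nonnegative functions, namely $G_1(x)=F_1(\psi(x))$ and $G_2(x)=F_2(\psi^*(\nabla\psi(x)))\det\nabla^2\psi(x)$, neither of which depends on $\lambda$. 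Convexity of $\lambda\mapsto\log as_\lambda$ on all of $\R$ is then precisely the statement that $t\mapsto\log\int G_1^{1-t}G_2^{t}\,dx$ is convex, which is a standard consequence of Hölder's inequality: for $t=\theta t_0+(1-\theta)t_1$ write $G_1^{1-t}G_2^{t}=\big(G_1^{1-t_0}G_2^{t_0}\big)^{\theta}\big(G_1^{1-t_1}G_2^{t_1}\big)^{1-\theta}$ and apply Hölder with exponents $1/\theta$ and $1/(1-\theta)$. One should note here that the hypothesis $F_1\circ\psi>0$ on $X_\psi$ and $F_2\circ\psi^*>0$ on $X_{\psi^*}$ (invoked when $\lambda\notin[0,1]$, i.e. when a negative exponent appears) is exactly what is needed to make these manipulations legitimate, and that by Theorem~\ref{generalduality} the roles of $\psi,\psi^*$ and $F_1,F_2$ are symmetric, so $G_2$ could equally be expressed as an integral over $X_{\psi^*}$.

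Next, for the two explicit inequalities I would specialise the Hölder step to the endpoints $t_0=0$ and $t_1=1$. At $\lambda=0$ one has $as_0(F_1,F_2,\psi)=\int_{X_\psi}F_1\circ\psi$, and by Theorem~\ref{generalduality} (with $1-\lambda=1$) $as_1(F_2,F_1,\psi^*)=as_0(F_1,F_2,\psi)$ as well; at $\lambda=1$, Theorem~\ref{generalduality} gives $as_1(F_1,F_2,\psi)=as_0(F_2,F_1,\psi^*)=\int_{X_{\psi^*}}F_2\circ\psi^*$. Thus for $\lambda\in[0,1]$, writing $\lambda=\lambda\cdot 1+(1-\lambda)\cdot 0$ and applying Hölder directly to \eqref{general} in the $G_1,G_2$ form yields
\[
as_\lambda(F_1,F_2,\psi)\le\Big(\int_{X_\psi}G_1\Big)^{1-\lambda}\Big(\int_{X_\psi}G_2\Big)^{\lambda}
=\Big(\int_{X_\psi}F_1\circ\psi\Big)^{1-\lambda}\Big(\int_{X_{\psi^*}}F_2\circ\psi^*\Big)^{\lambda},
\]
where in the last equality $\int_{X_\psi}G_2=as_1(F_1,F_2,\psi)=\int_{X_{\psi^*}}F_2\circ\psi^*$ by \eqref{mccannbis} (or equivalently by Theorem~\ref{generalduality}). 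The equality cases $\lambda=0,1$ are immediate from these endpoint identities. For $\lambda\notin[0,1]$ the inequality reverses: one of $1-\lambda,\lambda$ is negative, so one applies instead the reverse Hölder inequality — equivalently, one uses the convexity of $\log as_t$ already established, noting that $\log as_\lambda$ lies \emph{above} the secant line through the points $(0,\log as_0)$ and $(1,\log as_1)$ when $\lambda$ is outside $[0,1]$, which is exactly the asserted reversed bound after exponentiating.

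The only genuine subtlety — the place I expect to have to be careful rather than the place I expect real difficulty — is the handling of the negative exponents and the possibility that $G_1$ or $G_2$ vanishes or is infinite on part of the domain when $\lambda\notin[0,1]$: this is why the positivity hypotheses on $F_1\circ\psi$ and $F_2\circ\psi^*$ are assumed in that range, and one must also allow $as_\lambda$ to take the value $+\infty$, in which case the reversed inequality holds trivially. Modulo this bookkeeping, the whole corollary is a direct application of (reverse) Hölder to the $\lambda$-independent pair $G_1,G_2$, combined with the duality of Theorem~\ref{generalduality} to identify the two endpoint integrals with $\int_{X_\psi}F_1\circ\psi$ and $\int_{X_{\psi^*}}F_2\circ\psi^*$ respectively.
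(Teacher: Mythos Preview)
Your proposal is correct and follows essentially the same approach as the paper: the paper's proof is a terse two-sentence version of exactly what you wrote, invoking H\"older's inequality for the convexity of $\lambda\mapsto\log as_\lambda$ and using the duality relation of Theorem~\ref{generalduality} at $\lambda=1$ to identify $as_1(F_1,F_2,\psi)=as_0(F_2,F_1,\psi^*)=\int_{X_{\psi^*}}F_2\circ\psi^*$. Your explicit decomposition $as_\lambda=\int_{X_\psi}G_1^{1-\lambda}G_2^{\lambda}$ with $G_1=F_1\circ\psi$ and $G_2=(F_2\circ\psi^*\circ\nabla\psi)\det\nabla^2\psi$, and your secant-line reading of the reversed inequality for $\lambda\notin[0,1]$, simply make the paper's argument fully explicit.
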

\par
\begin{proof}
The convexity of $\lambda\mapsto \log(as_{\lambda}(F_1, F_2, \psi))$ is a consequence of H\"older inequality. 
For the inequalities we use H\"older inequality  and  also  the duality relation of  Theorem \ref{generalduality} with  $\lambda =1$,
$as_1(F_1, F_2, \psi)=as_0(F_2,F_1,\psi^*)=\int_{X_{\psi^*}} F_2 \circ \psi^*.$
\end{proof}
We define the non-increasing function $F:\R \to\R_+$ by 
\begin{equation}
\label{eq:defofF}
F(t)= \sup_{\frac{t_1+ t_2}{2} \ge t} \sqrt{F_1(t_1) F_2 ( t_2 )} . 
\end{equation}
Notice that if $F_1=F_2$ is a log-concave,  non-increasing function then $F=F_1=F_2$. 
\begin{cor}\label{generalaffineineq}
Let $F_1, F_2 \colon \R \to \R_+$, let $\psi : \R^n \to \R \cup \{+\infty\}$ be a convex function. Then there exists $z\in\R^n$ such that
\[
\forall \lambda \in [0,1/2], 
 \quad
as_{\lambda}(F_1, F_2, \psi_z) 
\le  \left( \int_{\R^n} F \left(\frac{|x |^2}{2}\right) \, d x \right)^{2\lambda} 
\left(\int_{X_\psi} F_1 \circ \psi \right)^{1-2\lambda} 
\]
Equality holds trivially if $\lam=0 $.
If $F_1 \circ \psi > 0$ on $X_{\psi}$ and $F_2 \circ \psi^* > 0$ on $X_{\psi^*}$ then 
\[
\forall \lam <0,   \quad
as_{\lambda}(F_1, F_2, \psi_z) 
\ge  \left( \int_{\R^n} F \left(\frac{|x |^2}{2}\right) \, d x \right)^{2\lambda} 
\left(\int_{X_\psi} F_1 \circ \psi \right)^{1-2\lambda}, 
\]
where $\psi_z(x) = \psi(z+x)$.
\\
If $F$ is decreasing,  $\lambda \ne 0 $ and $\int_{X_\psi} F_1 \circ \psi \ne 0$,  then there is equality in each of these inequalities if and only if  there exists $ c \in \R_+$, $a \in \R$ and a positive definite matrix $A$ such that, for every $x\in\R^n$ and $t\ge 0$, 
\[
\psi_z(x)=\langle Ax,x\rangle+a,\quad F_1(t+a)=c\  F(t)\quad{\rm and}\quad F_2(t-a)=\frac{F(t)}{c}.
\]
\end{cor}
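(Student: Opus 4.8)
The strategy is to reduce Corollary~\ref{generalaffineineq} to the Hölder-type inequality of Corollary~\ref{generalholder} after first replacing $F_1$ and $F_2$ by the single symmetric envelope $F$ and then invoking the functional Blaschke--Santal\'o inequality. I would proceed in four steps.

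\emph{Step 1: pointwise domination by $F$.} For $x\in X_\psi$ one has the Legendre identity $\psi(x)+\psi^*(\nabla\psi(x))=\langle x,\nabla\psi(x)\rangle$, so with $t_1=\psi(x)$ and $t_2=\psi^*(\nabla\psi(x))$ the definition \eqref{eq:defofF} of $F$ gives $F_1(\psi(x))^{1/2}F_2(\psi^*(\nabla\psi(x)))^{1/2}\le F\!\left(\frac{\psi(x)+\psi^*(\nabla\psi(x))}{2}\right)$. Since $F$ is non-increasing and $\psi(x)+\psi^*(\nabla\psi(x))=\langle x,\nabla\psi(x)\rangle\ge 2\sqrt{\psi(x)\psi^*(\nabla\psi(x))}$ is not directly what I want — instead I note that $as_\lambda(F_1,F_2,\psi)$ for $\lambda\in[0,1/2]$ can be written by splitting the exponent $\lambda$ on $F_2$ as $\lambda=\lambda+0$ and interpolating: writing $as_\lambda(F_1,F_2,\psi)=\int (F_1\circ\psi)^{1-2\lambda}\big[(F_1\circ\psi)^{1/2}(F_2\circ\psi^*\circ\nabla\psi)^{1/2}(\det\Hess\psi)^{1/2}\big]^{2\lambda}$, the bracketed factor is bounded pointwise by $F(\tfrac12\langle x,\nabla\psi(x)\rangle)(\det\Hess\psi(x))^{1/2}$. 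Hence $as_\lambda(F_1,F_2,\psi)\le \big(\int_{X_\psi}F_1\circ\psi\big)^{1-2\lambda}\,as_{1/2}(F,F,\psi)^{2\lambda}$ by Hölder, where $as_{1/2}(F,F,\psi)=\int_{X_\psi}(F\circ\psi)^{1/2}(F\circ\psi^*\circ\nabla\psi)^{1/2}(\det\Hess\psi)^{1/2}$ after using the Legendre identity once more. Actually this last identification requires $F(\tfrac12\langle x,\nabla\psi(x)\rangle)\le (F\circ\psi(x))^{1/2}(F\circ\psi^*(\nabla\psi(x)))^{1/2}$, which holds because $F$ is non-increasing and log-concave along the line $t_1=t_2$ — more cleanly, $as_{1/2}(F,F,\psi)=\int_{X_\psi}F\!\left(\tfrac12\langle x,\nabla\psi(x)\rangle\right)\sqrt{\det\Hess\psi}\,dx$ directly from Definition~\ref{def:log}, and then $F_1(\psi(x))^{1/2}F_2(\psi^*(\nabla\psi(x)))^{1/2}\le F(\tfrac12\langle x,\nabla\psi(x)\rangle)$ is exactly \eqref{eq:defofF}. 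So Step~1 reads: $as_\lambda(F_1,F_2,\psi)\le \big(\int_{X_\psi}F_1\circ\psi\big)^{1-2\lambda}\,as_{1/2}(F,F,\psi)^{2\lambda}$ for $\lambda\in[0,1/2]$, and the reverse for $\lambda<0$ under the positivity hypotheses.

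\emph{Step 2: bound $as_{1/2}(F,F,\psi_z)$ via Blaschke--Santal\'o.} By Corollary~\ref{generalholder} with $\lambda=1/2$, $F_1=F_2=F$ (which is non-increasing, hence $F$'s own envelope is itself), $as_{1/2}(F,F,\psi)\le \big(\int_{X_\psi}F\circ\psi\big)^{1/2}\big(\int_{X_{\psi^*}}F\circ\psi^*\big)^{1/2}$. Now I translate: there is a point $z$ (the barycenter of the measure with density proportional to $F\circ\psi$, suitably interpreted) such that the pair of functions $f(x)=F(\psi_z(x))$ and $g(y)=F(\psi_z^*(y))$ satisfies $f(x)g(y)\le F(\psi_z(x))F(\psi_z^*(y))\le F\!\left(\tfrac12(\psi_z(x)+\psi_z^*(y))\right)^2$; and since $\psi_z(x)+\psi_z^*(y)\ge\langle x,y\rangle$ and $t\mapsto F(t)^2$ is non-increasing, $f(x)g(y)\le F(\tfrac12\langle x,y\rangle)^2$. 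The function $h(u):=F(\tfrac12|u|^2/ ?)$ — here I want the scalar function $\rho$ with $\rho(x)\rho(y)\le e^{-\langle x,y\rangle}$ built from $F$; the standard device is that $u\mapsto F(\tfrac12\langle x,y\rangle)^2\le$ (geometric-mean bound) is dominated using $\langle x,y\rangle\le\tfrac12|x|^2+\tfrac12|y|^2$ and $F$ non-increasing, giving $f(x)g(y)\le F(\tfrac12|x|^2)F(\tfrac12|y|^2)$. Thus after centering $f$ at its barycenter, the functional Santal\'o inequality (in the form recalled at the start of Section~\ref{sec:shortproof}, or its extension to this geometric-mean setting) yields $\big(\int f\big)\big(\int g\big)\le\big(\int_{\R^n}F(\tfrac12|x|^2)\,dx\big)^2$. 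Combining with Corollary~\ref{generalholder}'s Hölder bound gives $as_{1/2}(F,F,\psi_z)\le \int_{\R^n}F(\tfrac12|x|^2)\,dx$, and feeding this into Step~1 produces the claimed inequality (with $\int_{X_\psi}F_1\circ\psi$ translation-invariant under $z$, as it is a full integral).

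\emph{Step 3: the reverse inequality for $\lambda<0$.} Here Hölder reverses: with exponents $1-2\lambda>1$ and $2\lambda<0$ one uses the reverse Hölder inequality, which requires the integrands to be positive a.e. — exactly the hypotheses $F_1\circ\psi>0$ on $X_\psi$, $F_2\circ\psi^*>0$ on $X_{\psi^*}$. The same envelope domination and the $\lambda\notin[0,1]$ case of Corollary~\ref{generalholder} (reverse Hölder again) go through, and the Santal\'o step is unchanged since it only bounds the $\lambda=1/2$ quantity.

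\emph{Step 4: equality case.} Assume $F$ is decreasing, $\lambda\ne0$, $\int_{X_\psi}F_1\circ\psi\ne0$. Equality propagates backwards through each step. Equality in the Hölder step of Step~1 forces $(F_1\circ\psi(x))$ proportional a.e.\ to $F(\tfrac12\langle x,\nabla\psi(x)\rangle)(\det\Hess\psi(x))^{1/2}\cdot(\text{the other factor})$ — more usefully, equality in \eqref{eq:defofF} pointwise forces, since $F$ is strictly decreasing, that $\tfrac{t_1+t_2}{2}$ is constant along the optimizing configuration, i.e.\ $\psi_z(x)+\psi_z^*(\nabla\psi_z(x))=\langle x,\nabla\psi_z(x)\rangle$ is a.e.\ constant, hence $\langle x,\nabla\psi_z(x)\rangle\equiv 2a$ for some constant; together with equality in Santal\'o (forcing $\psi_z$ quadratic, $\psi_z(x)=\langle Ax,x\rangle+a$ with $A$ positive definite, using the equality case recalled at the start of Section~\ref{sec:shortproof}), and equality in \eqref{eq:defofF} then gives, for the optimizing $(t_1,t_2)=(\psi_z(x),\psi_z^*(\nabla\psi_z(x)))$ with $t_1+t_2=2\langle Ax,x\rangle+2a$... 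I would argue that strict monotonicity of $F$ pins $t_1=\psi_z(x)$ and $t_2=\psi_z^*(\nabla\psi_z(x))=\langle A^{-1}y,y\rangle-a$ as the unique maximizers, yielding $F_1(t+a)=cF(t)$ and $F_2(t-a)=F(t)/c$ for $t\ge0$, with $c$ the proportionality constant from Hölder. Conversely one checks these forms give equality by direct substitution.

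\textbf{Main obstacle.} The delicate point is Step~2: reducing the two-variable envelope bound $f(x)g(y)\le F(\tfrac12(\psi_z(x)+\psi_z^*(y)))^2$ to a genuine Blaschke--Santal\'o-applicable inequality of the form $f(x)g(y)\le\rho(x)\rho(y)$ with $\int\rho=\int_{\R^n}F(\tfrac12|x|^2)\,dx$, \emph{and} simultaneously arranging the barycenter condition on the translated $f$ — one must check that the translation $z$ making $f=F\circ\psi_z$ centered is consistent with the translation appearing in the final statement, and that the geometric-mean form of Santal\'o (rather than the product form literally recalled) is what gets used; tracking the equality case through this reduction, especially extracting that $F_1(t+a)=cF(t)$ rather than merely a relation on a single point, is the part that will need the most care. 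I expect the cleanest route is to first prove everything for $F_1=F_2=F$ log-concave non-increasing (where $f=g=F\circ\psi_z$ and Santal\'o applies on the nose), then bootstrap to general $F_1,F_2$ by the pointwise $F$-domination, isolating the equality analysis to the strict monotonicity of $F$ in \eqref{eq:defofF}.
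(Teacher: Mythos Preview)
Your approach has a genuine gap in Step~1 and a related one in Step~2. In Step~1 you assert that
\[
as_{1/2}(F,F,\psi)=\int_{X_\psi}F\!\left(\tfrac12\langle x,\nabla\psi(x)\rangle\right)\sqrt{\det\Hess\psi(x)}\,dx
\]
``directly from Definition~\ref{def:log}''. This is false: Definition~\ref{def:log} with $F_1=F_2=F$ and $\lambda=1/2$ gives
$\int (F\circ\psi)^{1/2}(F\circ\psi^*\!\circ\nabla\psi)^{1/2}(\det\Hess\psi)^{1/2}$, which equals your quantity $Q(\psi)$ only when $F$ is an exponential. The envelope bound \eqref{eq:defofF} yields $as_{1/2}(F_1,F_2,\psi)\le Q(\psi)$ and also $as_{1/2}(F,F,\psi)\le Q(\psi)$, but no comparison between $as_{1/2}(F_1,F_2,\psi)$ and $as_{1/2}(F,F,\psi)$; so the chain from Step~1 to Step~2 is broken. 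In Step~2 you then try to reduce $f(x)g(y)\le F(\tfrac12\langle x,y\rangle)^2$ to a product form $\rho(x)\rho(y)$ via $\langle x,y\rangle\le\tfrac12|x|^2+\tfrac12|y|^2$: since $F$ is non-increasing this inequality goes the \emph{wrong} way, and you correctly flag this as your main obstacle.

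The paper sidesteps both issues by applying the functional Blaschke--Santal\'o inequality in its general $\rho$-form (recalled at the start of the proof): if $f(z_0+x)g(y)\le\rho(\langle x,y\rangle)^2$ then $(\int f)(\int g)\le(\int\rho(|x|^2))^2$. One takes $f=F_1\circ\psi$, $g=F_2\circ\psi_z^*$, and $\rho(t)=F(t/2)$ directly; the hypothesis is exactly \eqref{eq:defofF} combined with $\psi_z(x)+\psi_z^*(y)\ge\langle x,y\rangle$. This gives
\[
\Bigl(\int F_1\circ\psi\Bigr)\Bigl(\int F_2\circ\psi_{z_0}^*\Bigr)\le\Bigl(\int F(|x|^2/2)\,dx\Bigr)^2
\]
for some $z_0$, and then Corollary~\ref{generalholder} applied to $\psi_{z_0}$ finishes immediately---no reduction to $F_1=F_2=F$ is needed. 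The equality case likewise comes straight from the equality case of the $\rho$-form Santal\'o inequality. Your detour through $as_{1/2}(F,F,\cdot)$ is both unnecessary and, as written, incorrect.
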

\begin{rmk}{
(i) Notice that if $\psi$ is even  then one may choose $z=0$.
\\
(ii)
Moreover, for $\lam > 1/2$, we deduce from the duality relation proved in Theorem \ref{generalduality} that the same inequalities hold true exchanging $F_1$ and $F_2$, $\psi$ and $\psi^*$ and that the equality case is characterized for $\lam \ne 1$. 

}
\end{rmk}
\begin{proof}
We recall a general form of the functional Blaschke-Santal\'o inequality  \cite{FradeliziMeyer2007,Lehec2009b}.  Let $f$ be a non-negative integrable 
function on $\R^n$. There exists $z_0\in\R^n$ such that for every $\rho:\R_+\to\R_+$ and every $g : \R^n\to\R_+$ satisfying
\begin{equation}\label{hypsantalo}
f (z_0+ x ) g ( y ) \le \left(\rho(\sca{x,y})\right)^2 ,  
\end{equation}
for every $x,y \in \R^n$ with $\sca{x,y}>0$, we have 
\begin{equation}\label{concsantalo}
\int_{\R^n} f \, dx  \int_{\R^n} g \, dx  \leq \left(\int_{\R^n}\rho(|x|^2)dx\right)^2 . 
\end{equation}
If $f$ is even, a result of Ball \cite{KBallthesis} asserts that  one may choose $z_0 = 0$. 
Moreover, if there exists $g$ satisfying \eqref{hypsantalo} and equality holds in \eqref{concsantalo},  then there exists $c>0$ and an invertible $T$,  such that for every $x\in\R^n$, 
\begin{equation}
\label{Santalo-equality}
f(z_0+x)=c\rho\left(|Tx|^2\right) \quad{\rm and}\quad g(y)=\frac{1}{c}\rho\left(|T^{-1}x|^2\right).
\end{equation}
For $z\in\R^n$, let us denote $\psi_z^*=(\psi_z)^*$. Since $F$ is non-increasing, we have by (\ref{Legendre}), for every $x,y,z\in \R^n$ such that $\langle x,y \rangle>0$,
\[
F_1 ( \psi_z (x ) ) F_2 ( \psi_z^* (y ) ) 
\leq 
F^2 \left(\frac{ \psi_z ( x) +  \psi_z^* (y) }{2}\right)
\leq F^2 \left( \frac{\langle x,y \rangle}{2} \right) .
\]
By  the functional Blaschke-Santal\'o inequality  there exists $z_0\in\R^n$ such that 
\begin{equation}
\label{eq:BSfunctional}
\left( \int F_1 \circ \psi \right) \left( \int F_2 \circ \psi_{z_0}^* \right)
\leq 
\left( \int_{\R^n} F\left(\frac{|x |^2}{2}\right)  \, d x \right)^2 .
\end{equation}
Applying Corollary \ref{generalholder} to $\psi_{z_0}$,  we deduce that for $\lambda \in [0,1]$, 
 \begin{align*}
as_{\lambda}(F_1, F_2, \psi_{z_0}) 
& \le 
\left(\int_{X_\psi} F_1 \circ \psi \right)^{1-\lambda}\left(\int_{X_{\psi_{z_0}^*}} F_2 \circ \psi_{z_0}^*\right)^{\lambda} 
\\
& \le
\ \left( \int_{\R^n} F \left(\frac{|x |^2}{2}\right) \, d x \right)^{2\lambda} 
\left(\int_{X_{\psi} }F_1 \circ \psi \right)^{1-2\lambda}. 
\end{align*}
For $\lambda < 0$ we deduce from (\ref{eq:BSfunctional}) that 
\[
\left( \int_{X_{\psi_{}} }F_1 \circ \psi \right)^\lambda \left( \int_{X_{\psi_{z_0}^*}} F_2 \circ \psi_{z_0}^* \right)^\lambda
\geq 
\left( \int_{\R^n} F\left(\frac{|x |^2}{2}\right)  \, d x \right)^{2\lambda} 
\]
and we conclude by using the second part of Corollary  \ref{generalholder}.

To characterize the equality case, we suppose that $\int_{X_\psi} F_1 \circ \psi \ne 0$ which means that the expressions are not identically zero in the inequality. For $\lambda \neq 0$, if there is equality in one of the inequalities of Corollary \ref{generalaffineineq}, it follows from the proof that we have equality in the functional  Blaschke-Santal\'o inequality. Thus by (\ref{Santalo-equality}), 
there exists $c>0$ and an invertible matrix $T$,  such that for every $x\in\R^n$, 
\[
F_1\circ\psi_{z_0}(x)=c\  F\left(\frac{|Tx|^2}{2}\right) \quad{\rm and}\quad F_2\circ\psi_{z_0}^*(x)=\frac{1}{c}F\left(\frac{|T^{-1}x|^2}{2}\right).
\]
Let us define $\varphi(x)=\psi(T^{-1}x+z_0)$. Then we have 
\begin{eqnarray}\label{equalitycase}
F_1\circ\varphi(x)=c \ F\left(\frac{|x|^2}{2}\right) \quad{\rm and}\quad F_2\circ\varphi^*(x)=\frac{1}{c}F\left(\frac{|x|^2}{2}\right).
\end{eqnarray}
Hence 
\[
F\left(\frac{|x|^2}{2}\right)=\sqrt{F_1\circ\varphi(x)F_2\circ\varphi^*(x)}\le F\left(\frac{\varphi(x)+\varphi^*(x)}{2}\right)\le F\left(\frac{|x|^2}{2}\right).
\]
Since $F$ is decreasing,  we deduce that $\varphi(x)+\varphi^*(x)=|x|^2$. It is classical that this  implies that $\varphi(x)=|x|^2/2+a$.  See for example the argument given in the proof of Theorem~8 in \cite{FradeliziMeyer2007}. Defining $A=T^*T/2$,  we get  that $\psi_{z_0}(x)=\langle Ax,x\rangle+a$, for every $x\in\R^n$. From (\ref{equalitycase}) we deduce that for every $t\ge0$ 
\[
F_1(t+a)=c\  F(t)\quad{\rm and}\quad F_2(t-a)=\frac{1}{c}F(t).
\] 
Therefore all the conditions of the theorem are proved. Reciprocally, if these conditions are fulfilled,  a simple computation shows that there is  equality.
\end{proof}

\subsection{Application to particular functions: the log-concave case.} \label{application}

We define $F_1$ and $F_2$ on $\R$ by
  $F_1(t)=F_2(t) = e^{-t}$ ; then $F ( t ) = e^{-t}$ as well and we use the simplified notation 
\begin{equation}\label{asa}
as_\lam(\psi) = as_\lambda(e^{-t}, e^{-t},\psi) = \int_{X_\psi}e^{(2\lam-1)\psi(x)-\lam\langle x, \nabla\psi(x)\rangle}\left(\det \, \Hess \psi (x)\right)^\lam dx.
\end{equation}
Again, as before,  we can replace $X_\psi$ by $\Omega_\psi$ for $\lam >0$.
Observe that for the Euclidean norm $|\cdot|$,
\begin{equation}\label{Euklid}
as_\lam\left(\frac{|\cdot|^2}{2}\right) = \left(2 \pi\right)^\frac{n}{2}.
\end{equation}
Moreover, it is not difficult to see (see e.g., \cite{CaglarWerner}) that for any $\lam \in \R$, $\psi \mapsto as_\lambda(\psi)$  is a valuation on the set of convex functions $\psi$, i.e.,
if $\min
(\psi_1,\psi_2)$ is convex,  then
\[ as_\lam(\psi_1)+ as_\lam(\psi_2) =  as_\lam(\max (\psi_1, \psi_2)) + as_\lam(\min (\psi_1, \psi_2)), 
\]
and it is homogeneous of degree $(2 \lam -1)n$,  since  we have by (\ref{linearmap}) 
 for any linear invertible map $A$ on $\R^n$, for all convex 
$\psi$
$$
as_\lambda(\psi \circ A) =  |\det A|^{2\lambda-1} as_\lambda (\psi).
$$
For convex bodies with the origin in their interiors, such upper semi-continuous valuations were characterized as  $L_p$-affine surface areas  in \cite{Ludwig-Reitzner1999} and  \cite{Ludwig-Reitzner} which motivated us to call $as_\lam (\psi)$ the $L_\lam$-affine surface area of $\psi$.
This is further justified by Theorem \ref{norm}  of the next section (where we also give the definition of $L_p$-affine surface area for convex bodies), and by  the identity (\ref{motiv1}) of Section \ref{asa-sconcave}.

From Theorem~\ref{generalduality} and Corollary \ref{generalholder} we get that $\lam\mapsto \log\left(as_\lam(\psi)\right)$ is convex and that
\begin{equation}
\label{asa-dual}
\forall \lam \in \R, \ as_\lam(\psi)=as_{1-\lam}(\psi^*).
\end{equation}
The following isoperimetric inequalities are a direct consequence of Corollary~\ref{generalaffineineq} and a result of  \cite{Lehec2009b} which says  that the Santal\' o point $z_0$ in the functional Blaschke-Santal\'o inequality \eqref{hypsantalo} can be taken equal to $0$ when $\int xe^{-\psi(x)}dx=0$ or $\int x e^{-\psi^*(x)} dx = 0$.
\begin{cor}\label{funcsurfineq}
Let $\psi :\R^n\to \R\cup\{+\infty\}$ be a  convex function such that  $\int xe^{-\psi(x)}dx=0$ or $\int x e^{-\psi^*(x)} dx = 0$. 
Then
\[
\begin{split}
\forall \lambda \in [0,1/2], 
& \quad as_\lam(\psi)\le (2\pi)^{n\lam}\left(\int_{X_\psi} e^{-\psi}\right)^{1-2\lam},  
\\
\forall \lam\in (- \infty, 0],  &\quad as_\lam(\psi)\ge (2\pi)^{n\lam}\left(\int_{X_{\psi}}e^{-\psi}\right)^{1-2\lam}. 
\end{split}
\]
Equality holds in both inequalities for $\lambda\neq 0$, if and only if there exists $a\in\R$ and a positive definite matrix $A$ such that $\psi(x)=\langle Ax,x\rangle+a$, for every $x\in\R^n$.
\end{cor}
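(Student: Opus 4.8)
The plan is to deduce Corollary~\ref{funcsurfineq} directly from Corollary~\ref{generalaffineineq} by specializing to $F_1(t)=F_2(t)=\e^{-t}$, and then upgrading the choice of translation vector $z$ using the refined Blaschke--Santal\'o result of \cite{Lehec2009b}. First I would observe that with this choice of $F_1,F_2$ one has $F(t)=\e^{-t}$, as already noted in~\eqref{eq:defofF} and the sentence following it, since $\e^{-t}$ is log-concave and non-increasing. Plugging into the right-hand side of the inequalities of Corollary~\ref{generalaffineineq} gives $\int_{\R^n} F(|x|^2/2)\,dx = \int_{\R^n}\e^{-|x|^2/2}\,dx = (2\pi)^{n/2}$, so that $\bigl(\int_{\R^n}F(|x|^2/2)\,dx\bigr)^{2\lambda} = (2\pi)^{n\lambda}$, which is exactly the prefactor appearing in the statement. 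Thus Corollary~\ref{generalaffineineq} yields, for some $z\in\R^n$, the two claimed inequalities but with $as_\lambda(\psi)$ replaced by $as_\lambda(\psi_z)$.

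The next step is to remove the translation, i.e.\ to show $z$ can be taken to be $0$ under the hypothesis $\int x\e^{-\psi(x)}\,dx=0$ or $\int x\e^{-\psi^*(x)}\,dx=0$. This is where the cited strengthening of the functional Blaschke--Santal\'o inequality from \cite{Lehec2009b} enters: it asserts that the Santal\'o point $z_0$ in~\eqref{hypsantalo}--\eqref{concsantalo} may be chosen equal to $0$ precisely when the barycenter of $f$ or of $g$ is at the origin. In the proof of Corollary~\ref{generalaffineineq} the functions playing the role of $f$ and $g$ are $f=F_1\circ\psi=\e^{-\psi}$ and $g=F_2\circ\psi^*=\e^{-\psi^*}$, so the hypothesis of Corollary~\ref{funcsurfineq} is exactly the condition needed to force $z_0=0$, hence $\psi_z=\psi$. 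I would note that this is the same mechanism as Remark~(i) after Corollary~\ref{generalaffineineq}, which handles the even case; the present corollary simply replaces evenness by the weaker barycenter condition using the more precise input. One should also record that when $z=0$ the restriction $\lambda\in[0,1/2]$ for the upper bound and $\lambda\le 0$ for the lower bound is inherited verbatim from Corollary~\ref{generalaffineineq}.

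For the equality case, I would apply the equality characterization already established in Corollary~\ref{generalaffineineq}: since $F(t)=\e^{-t}$ is (strictly) decreasing, for $\lambda\ne 0$ equality forces the existence of $c\in\R_+$, $a\in\R$ and a positive definite matrix $A$ with $\psi_z(x)=\langle Ax,x\rangle+a$, $F_1(t+a)=c\,F(t)$ and $F_2(t-a)=F(t)/c$ for all $t\ge 0$. With $F_1=F_2=F=\e^{-t}$ the two scalar conditions read $\e^{-t-a}=c\,\e^{-t}$ and $\e^{-t+a}=\e^{-t}/c$, both of which are equivalent to $c=\e^{-a}$ and impose no further constraint; and since we have already argued $z$ may be taken $0$, this gives $\psi(x)=\langle Ax,x\rangle+a$. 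Conversely a direct substitution of $\psi(x)=\langle Ax,x\rangle + a$ into~\eqref{asa} shows both sides equal $(2\pi)^{n\lambda}(\det A)^{-\lambda}\bigl((\det A)^{-1/2}\e^{-a}(2\pi)^{n/2}\bigr)^{1-2\lambda}$ up to the same power of $2$, using~\eqref{Euklid} and~\eqref{linearmap}, so equality indeed holds; I would leave this verification as the routine computation it is.

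The only genuine subtlety, and the step I would flag as the main obstacle, is making sure the refined Blaschke--Santal\'o statement from \cite{Lehec2009b} is applicable in the exact form needed here: one must check that the barycenter hypothesis on $\e^{-\psi}$ (resp.\ $\e^{-\psi^*}$) is compatible with the general $\rho$-version~\eqref{hypsantalo} of the inequality used in the proof of Corollary~\ref{generalaffineineq}, not merely the classical quadratic version. Since \cite{Lehec2009b} proves the Santal\'o-point statement at the level of the general functional inequality (with an arbitrary gauge $\rho$), this is legitimate, but it is the one place where one cannot simply quote Corollary~\ref{generalaffineineq} as a black box and must instead revisit its proof; everything else is bookkeeping. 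I would therefore present the argument as: "Apply Corollary~\ref{generalaffineineq}; its proof uses the functional Blaschke--Santal\'o inequality, and under the stated barycenter hypothesis the result of \cite{Lehec2009b} allows the point $z_0$ there to be $0$, so $z=0$; the equality case follows from that of Corollary~\ref{generalaffineineq} with $F_1=F_2=F=\e^{-\cdot}$."
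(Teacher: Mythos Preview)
Your proposal is correct and follows essentially the same approach as the paper: the paper states the corollary as a direct consequence of Corollary~\ref{generalaffineineq} specialized to $F_1=F_2=\e^{-t}$, together with the result of \cite{Lehec2009b} allowing $z_0=0$ under the barycenter hypothesis. Your write-up simply spells out the bookkeeping (the computation of $\int F(|x|^2/2)\,dx=(2\pi)^{n/2}$ and the equality case) that the paper leaves implicit.
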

\noindent
\begin{rmk} {(i) To emphasize the isoperimetric character of these inequalities, note that with (\ref{Euklid}), the inequalities are
equivalent to 
\[
\forall \lam \in [0, 1/2], \quad
\frac{as_\lam(\psi)}{as_\lam\left(\frac{|\cdot|^2}{2}\right)} \leq \left( \frac{\int_{X_\psi} e^{-\psi}}{\int e^{-\frac{|\cdot|^2}{2}}}\right)^{1-2\lam}
\]
and
\[
\forall \lam < 0, \quad
\frac{as_\lam(\psi)}{as_\lam\left(\frac{|\cdot|^2}{2}\right)} \geq \left( \frac{\int_{X_\psi} e^{-\psi}}{\int e^{-\frac{|\cdot|^2}{2}}}\right)^{1-2\lam}.
\]
\par
\noindent
(ii) It follows from Corollary \ref{funcsurfineq} and the functional Blaschke Santal\'o inequality that 
\begin{equation*}
\forall \lam \in [0, 1/2], \quad
as_\lam(\psi) as_\lam(\psi^*)  \leq \left(2 \pi\right)^n.
\end{equation*}
}
\end{rmk}
There are several  other direct consequences of Corollary \ref{funcsurfineq} that should be noticed. 
As observed already,  we have 
for every $\lam \in (0, 1/2]$, 
\[
as_\lam(\psi) = \int _{\Omega_\psi} e^{(2\lam-1)\psi(x)-\lam\langle x, \nabla\psi(x)\rangle}\left(\det \, \Hess \psi (x)\right)^\lam dx.
\]
Since $\int_{X_\psi} e^{-\psi} \le \int e^{-\psi}$ we deduce from  Corollary \ref{funcsurfineq} that for any $\lam \in (0, 1/2]$,
\begin{equation}
\label{eq:classic-isoperimetry}
\int _{\Omega_\psi} e^{(2\lam-1)\psi(x)-\lam\langle x, \nabla\psi(x)\rangle}\left(\det \, \Hess \psi (x)\right)^\lam dx
\le 
(2\pi)^{n\lam}\left(\int e^{-\psi }\right)^{1-2\lam}.
\end{equation}
This inequality holds trivially true also for $\lam = 0$. 
Moreover, by Theorem \ref{generalduality}, we know that $as_\lam(\psi) = as_{1 - \lam}(\psi^*)$. Since the inequalities of Corollary \ref{funcsurfineq} are also valid when $\int x e^{-\psi^*(x)} dx = 0$, we deduce from (\ref{eq:classic-isoperimetry}) that if $\lam \in [1/2, 1]$, 
\begin{align*}
\int _{\Omega_\psi} e^{(2\lam-1)\psi(x)-\lam\langle x, \nabla\psi(x)\rangle}\left(\det \, \Hess \psi (x)\right)^\lam  dx & = as_\lam(\psi) 
\\
&= as_{1 - \lam}(\psi^*) 
\le 
(2\pi)^{n(1-\lam)}\left(\int e^{-\psi^*}\right)^{2\lam - 1}.
\end{align*}
By the Blaschke-Santal\'o functional inequality (see (\ref{eq:BSfunctional})), we know that $\int e^{-\psi} \int e^{-\psi^*} \le (2\pi)^n$ and we conclude that
for all $\lam \in [1/2, 1]$,
\[
\int _{\Omega_\psi}e^{(2\lam-1)\psi(x)-\lam\langle x, \nabla\psi(x)\rangle}\left(\det \, \Hess \psi (x)\right)^\lam  dx
\le
(2\pi)^{n\lam}\left(\int e^{-\psi}\right)^{1-2\lam}.
\]
For $\lam< 0$ or $\lam> 1$, an important case concerns  $C^2$ convex functions $\psi$. In such a situation $X_\psi =  \Omega_\psi$ and 
$X_{\psi^*} = \Omega_{\psi^*}$ and we deduce from Corollary \ref{generalaffineineq} that for all $\lam< 0$,
\[
\int_{\Omega_\psi} e^{(2\lam-1)\psi(x)-\lam\langle x, \nabla\psi(x)\rangle}\left(\det \, \Hess \psi (x)\right)^\lam dx
\ge 
(2\pi)^{n\lam}\left(\int e^{-\psi}\right)^{1-2\lam}.
\]
For all $\lam> 1$, we go back to Corollary \ref{generalholder} and deduce that 
\[
\int_{\Omega_\psi} e^{(2\lam-1)\psi(x)-\lam\langle x, \nabla\psi(x)\rangle}\left(\det \, \Hess \psi (x)\right)^\lam dx
= as_{\lam}(\psi) 
\ge 
\left(\int e^{-\psi}\right)^{1 -\lam}
\left(\int e^{-\psi^*}\right)^{\lam}.
\]
By the asymptotic functional reverse Santal\'o inequality \cite{FradeliziMeyer2008} (see also \cite{KlartagMilman} in the even case),  there exists a constant $c >0$ such that $\int e^{-\psi} \int e^{-\psi^*} \ge c^n$.  Therefore,  for all $\lam> 1$,
\[
\int _{\Omega_\psi} e^{(2\lam-1)\psi(x)-\lam\langle x, \nabla\psi(x)\rangle}\left(\det \, \Hess \psi (x)\right)^\lam dx
\ge 
c^{n\lam}\left(\int e^{-\psi}\right)^{1-2\lam}.
\]
We have proved 
\begin{cor}
\label{cor:classic}
Let $\psi :\R^n\to \R\cup\{+\infty\}$ be a proper convex function such that  $\int xe^{-\psi(x)}dx=0$ or $\int x e^{-\psi^*(x)} dx = 0$. 
Then 
\[
\forall \lam \in [0,1], \int _{\Omega_\psi}e^{(2\lam-1)\psi(x)-\lam\langle x, \nabla\psi(x)\rangle}\left(\det \, \Hess \psi (x)\right)^\lam dx
\le 
(2\pi)^{n\lam}\left(\int e^{-\psi}\right)^{1-2\lam}, 
\]
Moreover,  if $\psi \in C^2(\Omega_\psi)$,
\[
\forall \lam < 0, 
\int_{\Omega_\psi} e^{(2\lam-1)\psi(x)-\lam\langle x, \nabla\psi(x)\rangle}\left(\det \, \Hess \psi (x)\right)^\lam dx
\ge 
(2\pi)^{n\lam}\left(\int e^{-\psi}\right)^{1-2\lam}
\]
and there exists an absolute constant $c > 0$ such that 
\[
\forall \lam > 1,
\int _{\Omega_\psi} e^{(2\lam-1)\psi(x)-\lam\langle x, \nabla\psi(x)\rangle}\left(\det \, \Hess \psi (x)\right)^\lam dx
\ge 
c^{n\lam}\left(\int e^{-\psi}\right)^{1-2\lam}.
\]
\end{cor}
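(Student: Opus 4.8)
The plan is to assemble the results already established---Corollary~\ref{funcsurfineq}, the duality identity $as_\lambda(\psi)=as_{1-\lambda}(\psi^*)$ of Theorem~\ref{generalduality}, the two halves of Corollary~\ref{generalholder}, and both the direct and the asymptotic reverse functional Blaschke--Santal\'o inequalities---treating the three ranges of $\lambda$ in turn. Throughout I specialize to $F_1=F_2=F=e^{-t}$, so that $as_\lambda$ is the quantity $as_\lambda(\psi)$ of~\eqref{asa} and $\int_{\R^n}F(|x|^2/2)\,dx=(2\pi)^{n/2}$, and I use the refinement of~\cite{Lehec2009b} recalled before Corollary~\ref{funcsurfineq}, which under the hypothesis $\int xe^{-\psi}=0$ or $\int xe^{-\psi^*}=0$ allows every relevant Santal\'o point to be placed at the origin. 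For $\lambda\in[0,1/2]$ the first inequality is then exactly Corollary~\ref{funcsurfineq} combined with the trivial bound $\int_{X_\psi}e^{-\psi}\le\int_{\Omega_\psi}e^{-\psi}=\int e^{-\psi}$; no regularity on $\psi$ is needed here.

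For $\lambda\in[1/2,1]$ I would use the duality identity to write $as_\lambda(\psi)=as_{1-\lambda}(\psi^*)$ with $1-\lambda\in[0,1/2]$; since the barycenter hypothesis is symmetric in $\psi$ and $\psi^*$, Corollary~\ref{funcsurfineq} applies to $\psi^*$ and yields $as_{1-\lambda}(\psi^*)\le(2\pi)^{n(1-\lambda)}\bigl(\int e^{-\psi^*}\bigr)^{2\lambda-1}$. The exponent $2\lambda-1$ is nonnegative, so I may insert the functional Blaschke--Santal\'o bound $\int e^{-\psi}\int e^{-\psi^*}\le(2\pi)^n$ of~\eqref{eq:BSfunctional} in the form $\int e^{-\psi^*}\le(2\pi)^n/\int e^{-\psi}$; the powers of $2\pi$ then collect to $(2\pi)^{n\lambda}$ and one is left with $(2\pi)^{n\lambda}\bigl(\int e^{-\psi}\bigr)^{1-2\lambda}$, which finishes the first displayed inequality.

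For the two reverse statements I add the hypothesis $\psi\in C^2(\Omega_\psi)$ with nondegenerate Hessian, under which $X_\psi=\Omega_\psi$ and $X_{\psi^*}=\Omega_{\psi^*}$, so that $\int_{X_\psi}e^{-\psi}=\int e^{-\psi}$ and the integral over $\Omega_\psi$ in the statement is literally $as_\lambda(\psi)$. For $\lambda<0$ the reverse half of Corollary~\ref{generalaffineineq} (equivalently the second inequality of Corollary~\ref{funcsurfineq}), with Santal\'o point $0$, immediately gives $as_\lambda(\psi)\ge(2\pi)^{n\lambda}\bigl(\int e^{-\psi}\bigr)^{1-2\lambda}$. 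For $\lambda>1$ I would instead invoke the second inequality of Corollary~\ref{generalholder}, $as_\lambda(\psi)\ge\bigl(\int e^{-\psi}\bigr)^{1-\lambda}\bigl(\int e^{-\psi^*}\bigr)^{\lambda}$, together with the asymptotic reverse functional Santal\'o inequality of~\cite{FradeliziMeyer2008} (and~\cite{KlartagMilman} in the even case): there is an absolute $c>0$ with $\int e^{-\psi}\int e^{-\psi^*}\ge c^n$, hence, since $\lambda>0$, $\bigl(\int e^{-\psi^*}\bigr)^\lambda\ge c^{n\lambda}\bigl(\int e^{-\psi}\bigr)^{-\lambda}$; multiplying the two bounds yields $as_\lambda(\psi)\ge c^{n\lambda}\bigl(\int e^{-\psi}\bigr)^{1-2\lambda}$.

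No step is genuinely hard: the only non-elementary inputs are the functional Blaschke--Santal\'o inequality and its asymptotic reverse, both quoted, while everything else is H\"older's inequality packaged in Corollaries~\ref{generalholder}--\ref{generalaffineineq} together with the duality relation. The point that deserves a careful word is the passage from $X_\psi$ to $\Omega_\psi$ in the reverse ranges: for $\lambda<0$ the factor $(\det\Hess\psi)^\lambda$ forces one to work where the Hessian is invertible---at points of $\Omega_\psi\setminus X_\psi$ the integrand is $+\infty$, so the inequality is trivially true there anyway---while for $\lambda>1$ such points contribute $0$; in both cases reading the $C^2$ hypothesis as $C^2$ with positive definite Hessian makes $\Omega_\psi\setminus X_\psi$ empty and the bookkeeping clean, and I would state this explicitly. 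I do not anticipate any obstacle beyond this.
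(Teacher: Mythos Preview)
Your proposal is correct and follows essentially the same route as the paper: Corollary~\ref{funcsurfineq} for $\lambda\in[0,1/2]$, duality plus Corollary~\ref{funcsurfineq} applied to $\psi^*$ combined with functional Blaschke--Santal\'o for $\lambda\in[1/2,1]$, the reverse part of Corollary~\ref{generalaffineineq} for $\lambda<0$, and Corollary~\ref{generalholder} together with the asymptotic reverse Santal\'o inequality for $\lambda>1$. Your explicit discussion of the $X_\psi$ versus $\Omega_\psi$ bookkeeping is in fact slightly more careful than the paper's, which simply asserts $X_\psi=\Omega_\psi$ under the $C^2$ hypothesis without commenting on invertibility of the Hessian.
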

These are the complete analogues of the $L_p$-affine surface area inequalities due to \cite{Lutwak1996, Hug1996, SW2004} and this will be discussed in more details in the next subsection.

\subsection{The case of convex bodies.}
\label{sec:convexbodies}
We continue to study the case $F_1(t) = F_2(t) = e^{-t}$. 
Additionally, we consider the case of 2-homogeneous proper convex functions $\psi$, that is $\psi(\lambda x) = \lambda^2 \psi(x)$ for any $\lambda \in \R_+$ and $x \in \R^n$. Such functions $\psi$ are necessarily (and this is obviously sufficient) of the form $\psi(x) = \|x\|_K^2/2$ for a certain convex body $K$ with $0$ in its interior. 
Here, $\| \cdot\|_K$ is the gauge function the  convex body $K$, 
$$
\|x\|_K = \min\{ \alpha\geq 0: \ x \in  \alpha K\} = \max_{y \in K^\circ} \langle x, y \rangle = h_{K^\circ} (x).
$$ 
Differentiating with respect to $\lam$ at $\lam=1$, we get  
$$\langle x, \nabla \psi(x) \rangle = 2 \psi(x). $$ 
Thus  for  $2$-homogeneous functions $\psi$,   formula (\ref{asa}) further simplifies to 
\begin{equation}\label{2homoasa}
as_\lam(\psi)=\int_{X_\psi}\left(\det \, \Hess  \psi (x)\right)^\lam e^{-\psi(x)}dx, 
\end{equation}
where $X_\psi$ is the positive cone generated by the points of $\partial K$ where the Gauss curvature is strictly positive.
The following theorem indicates why we call $as_\lambda(\psi)$ the $L_\lambda$-affine surface area of $\psi$.
First we recall that  for  $p \in \mathbb{R}$, $p \neq -n$, the $L_p$-affine surface area for  a convex body
$K$   in $\mathbb{R}^n$ with the origin in its interior is defined  \cite{Hug1996, Lutwak1996, SW2004} 
as 
\begin{equation}\label{asp-K}
as_p(K) = \int_{\partial K} \frac{ \kappa_K(x)^\frac{p}{n+p}}{\langle x, N_K(x) \rangle^\frac{n(p-1)}{n+p}} d\mu_K(x).
\end{equation}
Here, $N_K(x)$ is the outer unit normal to the boundary $\partial K$ in the boundary point $x$, $\mu_K$ is the usual surface area measure on $\partial K$  and $\kappa_K(x)$ is the Gauss curvature in $x$. We denote by $(\partial K)_+$ the points of $\partial K$ where the Gauss curvature is strictly positive.
\begin{theo} \label{norm}
Let $K$ be  a convex body in $\mathbb{R}^n$ containing the origin in its interior. For any $p \geq 0$, let $\lam=\frac{p}{n+p}$. Then 
\[
as_\lam\left(\frac{\|\cdot\|_K^2}{2}\right) = \frac{(2\pi)^\frac{n}{2}}{n|B_2^n|} \  as_p(K).
\]
Moreover, if $(\partial K)_+$ has full Lebesgue measure in $\partial K$,  then the same relation holds true for every $p \ne -n$.
\end{theo}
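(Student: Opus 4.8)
The plan is to evaluate both sides of the claimed identity in polar-type coordinates adapted to $K$ and to match the integrands. Write $\psi=\|\cdot\|_K^2/2$; every $x\in\R^n\setminus\{0\}$ is written uniquely as $x=ru$ with $r=\|x\|_K>0$ and $u\in\partial K$, and in these coordinates Lebesgue measure disintegrates ($\mu_K$-almost everywhere on $\partial K$) as
\[
dx = r^{\,n-1}\,\langle u,N_K(u)\rangle\,dr\,d\mu_K(u).
\]
Since $\psi$ is $2$-homogeneous, $\psi(ru)=r^2/2$, and since $\Hess\psi$ is $0$-homogeneous, $\det\Hess\psi(ru)$ depends on $u$ alone. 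Thus, using the simplified formula (\ref{2homoasa}) and the stated description of $X_\psi$ as the positive cone over $(\partial K)_+$, everything reduces to the pointwise identity
\[
\det\Hess\psi(ru) = \frac{\kappa_K(u)}{\langle u,N_K(u)\rangle^{\,n+1}},\qquad u\in(\partial K)_+ .
\]
Granting this, (\ref{2homoasa}) becomes, after integrating out $r$ by $\int_0^\infty r^{\,n-1}e^{-r^2/2}\,dr=(2\pi)^{n/2}/(n|B_2^n|)$,
\[
as_\lambda(\psi)=\frac{(2\pi)^{n/2}}{n|B_2^n|}\int_{(\partial K)_+}\kappa_K(u)^{\lambda}\,\langle u,N_K(u)\rangle^{\,1-\lambda(n+1)}\,d\mu_K(u),
\]
and for $\lambda=p/(n+p)$ the exponents are $\lambda=p/(n+p)$ and $1-\lambda(n+1)=-n(p-1)/(n+p)$, precisely those in the definition (\ref{asp-K}) of $as_p(K)$. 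This yields the asserted identity.

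The pointwise Hessian formula is the heart of the matter, and I would derive it by reading $\det\Hess\psi(x)$ as the Jacobian of $\nabla\psi$ — exactly what the McCann change of variables (\ref{mccannbis}) encodes — and by running the same coordinates on the target. Since $\psi^{*}=\|\cdot\|_{K^\circ}^2/2$ and $\nabla\psi$ is $1$-homogeneous, $\nabla\psi$ carries the ray $\R_+ u$ onto $\R_+ y(u)$ with $y(u)=\nabla\psi(u)=N_K(u)/\langle u,N_K(u)\rangle\in\partial K^\circ$, and $\|\nabla\psi(ru)\|_{K^\circ}=r$, so the radial coordinate is preserved. Disintegrating Lebesgue measure on the target through $y=\|y\|_{K^\circ}v$, $v\in\partial K^\circ$, and comparing with the source gives
\[
\det\Hess\psi(ru)=\frac{\langle y(u),N_{K^\circ}(y(u))\rangle}{\langle u,N_K(u)\rangle}\cdot\frac{d\mu_{K^\circ}(y(u))}{d\mu_K(u)}.
\]
Now $u\mapsto y(u)$ factors as the Gauss map $N_K\colon\partial K\to S^{n-1}$ followed by the radial parametrization $\xi\mapsto\xi/h_K(\xi)$ of $\partial K^\circ$; the Jacobian of the first is $\kappa_K(u)$ by definition of Gauss curvature, and that of the second is $h_K(\xi)^{-n}/\langle y,N_{K^\circ}(y)\rangle$ by the cone-volume formula applied to $K^\circ$. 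Substituting, the two occurrences of $\langle y(u),N_{K^\circ}(y(u))\rangle$ cancel, and since $h_K(N_K(u))=\langle u,N_K(u)\rangle$ one is left with $\kappa_K(u)/\langle u,N_K(u)\rangle^{\,n+1}$.

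The step I expect to demand the most care is the regularity underpinning the above: $\psi$ being merely convex, all the differential-geometric identities must be read in the Alexandrov sense and hold only $\mu_K$-almost everywhere on $(\partial K)_+$. But this is exactly the setting of (\ref{mccannbis}) (Corollary~4.3 and Proposition~A.1 of \cite{mccann}): $X_\psi$ is the positive cone over $(\partial K)_+$, $\nabla\psi$ maps it onto $X_{\psi^*}$, the positive cone over $(\partial K^\circ)_+$, and the change of variables used above is an instance of (\ref{mccannbis}), with the curvature and the boundary surface measures taken in the appropriate generalized sense.

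It remains to discuss the range of $p$. For $p>0$ one has $\lambda\in(0,1)$, so both $\kappa_K^{\lambda}$ and $(\det\Hess\psi)^{\lambda}$ vanish off $(\partial K)_+$; the degenerate part of $\partial K$ then contributes nothing to either side and the identity holds with no further hypothesis. For $p=0$ — where $as_0(K)=\int_{\partial K}\langle u,N_K(u)\rangle\,d\mu_K$ is an integral over all of $\partial K$ — and for $p<0$, $p\neq -n$ — where $\lambda\le 0$ or $\lambda>1$ and negative powers of $\kappa_K$ or of $\langle u,N_K(u)\rangle$ appear — one needs $(\partial K)_+=\partial K$ up to a $\mu_K$-null set, both so that the integrands are defined and so that $X_\psi$ agrees with $\Omega_\psi$ up to a Lebesgue-null set; under that hypothesis the same computation, now carried out over all of $\partial K$, gives the ``moreover'' clause.
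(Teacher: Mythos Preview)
Your overall framework matches the paper's exactly: pass to polar coordinates adapted to $K$, use the $0$-homogeneity of $\det\Hess\psi$ to separate variables, integrate out $r$ via $\int_0^\infty r^{n-1}e^{-r^2/2}\,dr=(2\pi)^{n/2}/(n|B_2^n|)$, and reduce the whole statement to the pointwise identity $\det\Hess\psi(u)=\kappa_K(u)/\langle u,N_K(u)\rangle^{n+1}$ on $(\partial K)_+$. Where you diverge is in the proof of that identity. The paper isolates it as Lemma~\ref{Hesse} and proves it by a direct matrix computation: write $\nabla f=N_K/\|N_K\|_{K^\circ}$ for $f=\|\cdot\|_K$, differentiate once more, and evaluate the resulting $n\times n$ determinant via a block factorization adapted to $\mathrm{span}(N_K(x))\oplus N_K(x)^\perp$. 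Your argument is more measure-theoretic: you read $\det\Hess\psi$ as the Jacobian of $\nabla\psi$, note that $\nabla\psi$ preserves the radial coordinate (in the $K$/$K^\circ$ gauges), so the Jacobian is carried entirely by the boundary map $u\mapsto N_K(u)/\langle u,N_K(u)\rangle$, and then factor this as the Gauss map composed with the radial parametrization of $\partial K^\circ$, computing each Jacobian via the cone-volume identity. Your route sidesteps the matrix algebra and makes the appearance of $\kappa_K$ transparent; the paper's route is more self-contained (no polar body or cone-volume formula needed) and delivers a genuinely pointwise formula rather than an equality of Radon--Nikodym derivatives. One small remark: you are right to flag $p=0$ as requiring the full-measure hypothesis on $(\partial K)_+$; the paper's statement groups $p=0$ with $p>0$, but for a body (say a polytope) with $\mu_K((\partial K)_+)<\mu_K(\partial K)$ the left side integrates only over the cone above $(\partial K)_+$ while $as_0(K)=n|K|$ does not, so your caution there is warranted.
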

\begin{rmk}{  For all $p$, $as_p(B^n_2)= n |B^n_2|$. Therefore, together with (\ref{Euklid}), the identity given in the theorem  can be written as 
\[
\frac{as_\lam\left(\frac{\|\cdot\|_K^2}{2}\right)}{as_\lam\left(\frac{|\cdot|^2}{2}\right)} = \frac{as_p(K)}{as_p(B^n_2)}.
\]
}
\end{rmk}
We will need the following technical lemma.
 \begin{lem} \label{Hesse}
Let $K$ be a convex body in $\mathbb{R}^n$ with the origin in its interior and let 
$\psi(x) = \frac{1}{2} \Vert x \Vert_{K}^2$.
Then for all $x\in (\partial K)_+$, 
\[
\det  \,( \Hess \psi(x) ) = \frac{ \kappa_K (x) }{ \Vert G_K(x) \Vert_{K^\circ}^{n+1} } ,
\]
where  
$G_K \colon (\partial K)_+ \to \mathbb S^{n-1}$ is the Gauss map. 
\end{lem}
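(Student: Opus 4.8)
The plan is to reduce the computation of $\det(\Hess \psi)$ at a boundary point $x \in (\partial K)_+$ to the geometry of the Gauss map. The key observation is that $\psi(x) = \frac12\|x\|_K^2$ is $2$-homogeneous, so its Hessian is $0$-homogeneous, i.e. constant along rays. Moreover, on the boundary $\partial K$ one has $\|x\|_K = 1$, so $\psi \equiv \frac12$ there, and $\nabla\psi(x) = \|x\|_K \nabla\|x\|_K(x) = \nabla\|x\|_K(x)$. The gradient $\nabla\|x\|_K$ at a point of $\partial K$ is the outer normal direction, suitably normalized; precisely, since $\|x\|_K$ is $1$-homogeneous, Euler's identity gives $\langle x, \nabla\|x\|_K(x)\rangle = \|x\|_K = 1$, and $\nabla\|x\|_K(x)$ is parallel to $N_K(x)$, hence $\nabla\|x\|_K(x) = N_K(x)/\langle x, N_K(x)\rangle$. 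This already identifies $G_K(x) := \nabla\psi(x)/|\nabla\psi(x)|$ with $N_K(x)$, and shows $\nabla\psi$ maps $(\partial K)_+$ onto $\partial K^\circ$ (the gradient of a $1$-homogeneous gauge lands on the polar body's boundary); in fact $\nabla\psi(x) = G_K(x)/\|G_K(x)\|_{K^\circ}$ after one checks the radial scaling, which will contribute the factor $\|G_K(x)\|_{K^\circ}$.

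Next I would compute the determinant by decomposing $\R^n$ at $x$ into the radial direction $\R x$ and the tangent space $T_x(\partial K)$. Because $\Hess\psi$ is $0$-homogeneous, $\Hess\psi(x)\cdot x = \nabla(\langle\nabla\psi, x\rangle)(x) - \nabla\psi(x)$; using $\langle\nabla\psi(x),x\rangle = 2\psi(x) = \|x\|_K^2$ (Euler for the $2$-homogeneous $\psi$) one gets $\Hess\psi(x)\cdot x = \nabla(\|\cdot\|_K^2)(x) - \nabla\psi(x) = 2\nabla\psi(x) - \nabla\psi(x) = \nabla\psi(x)$. So $\nabla\psi(x)$ is an eigenvector-like image of the radial direction, and on the complementary tangent hyperplane the restriction of $\Hess\psi(x)$ is the derivative of the Gauss-type map $x \mapsto \nabla\psi(x)$ along $\partial K$. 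Choosing adapted bases — an orthonormal basis of $T_x(\partial K)$ on the source side and of $T_{N_K(x)}\mathbb{S}^{n-1}$ on the target side — the relevant $(n-1)\times(n-1)$ block is, up to the normalizing scalar coming from $\|G_K(x)\|_{K^\circ}$, exactly the differential of the Gauss map $G_K$, whose determinant is by definition the Gauss curvature $\kappa_K(x)$. Assembling the block-triangular determinant (radial part contributing a factor, tangential part contributing $\kappa_K(x)$ divided by appropriate powers of $\langle x, N_K(x)\rangle$ and $\|G_K(x)\|_{K^\circ}$) and tracking the homogeneity degrees of each factor should yield $\det(\Hess\psi(x)) = \kappa_K(x)/\|G_K(x)\|_{K^\circ}^{n+1}$.

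The cleanest way to organize the bookkeeping is probably to use the factorization $\psi = \frac12 h_{K^\circ}^2$ (since $\|x\|_K = h_{K^\circ}(x)$) and the standard fact from convex geometry relating the Hessian of $\frac12 h_L^2$ to the reverse Weingarten map / the curvature of $\partial L$; alternatively, one can pass through the Legendre transform, noting that $(\frac12\|\cdot\|_K^2)^* = \frac12\|\cdot\|_{K^\circ}^2$ and using McCann's identity $\Hess\psi^*(\nabla\psi(x)) = (\Hess\psi(x))^{-1}$ from the excerpt, which swaps the roles of $K$ and $K^\circ$ and may make one of the two computations trivial. The main obstacle I anticipate is purely the change-of-basis bookkeeping: correctly normalizing the Gauss map (the image point is $G_K(x) = N_K(x)$ but the gradient $\nabla\psi(x)$ is a rescaled version of it, scaled by $1/\|G_K(x)\|_{K^\circ}$ or a power thereof), and keeping track of how the $2$-homogeneity makes the radial direction contribute its own scalar factor, so that the total power of $\|G_K(x)\|_{K^\circ}$ comes out to exactly $n+1$ rather than $n$ or $n-1$. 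Degree-counting via the $SL(n)$-covariance in (\ref{linearmap}) — both sides of the claimed identity must transform the same way under $x \mapsto Ax$ — gives a useful consistency check on the exponents.
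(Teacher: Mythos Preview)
Your outline is correct and follows essentially the same strategy as the paper's proof: split $\R^n$ into a line and the tangent hyperplane $T_x(\partial K)=N_K(x)^\perp$, identify the tangential block with the differential of the Gauss map (hence $\kappa_K$), and track the scaling factors coming from $\|G_K(x)\|_{K^\circ}=\langle x,N_K(x)\rangle$. The only notable difference is the choice of the distinguished line: the paper works in the single orthogonal decomposition $\mathrm{span}(N_K(x))\oplus N_K(x)^\perp$ and, because that basis is not adapted to the radial direction, obtains a full $2\times 2$ block matrix that it then factors explicitly; your choice of $\R x$ on the source side together with $\mathrm{span}(N_K(x))$ on the target side makes the first column $(1/a,0,\dots,0)^T$ immediately (from $\Hess\psi(x)\,x=\nabla\psi(x)$), so the matrix is block upper-triangular from the start and you trade the factorization step for a change-of-basis Jacobian $\det[x\,|\,e_1\,|\cdots]=\langle x,N_K(x)\rangle$. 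Either way the remaining work --- showing that the $(n-1)\times(n-1)$ tangential block of $\Hess f$ (with $f=\|\cdot\|_K$) has determinant $\kappa_K(x)/\|G_K(x)\|_{K^\circ}^{\,n-1}$ --- is the same computation, and the paper carries it out via the quotient-rule formula for $\Hess f$ in terms of $d_xN_K$ and the identity $N_{K^\circ}(N_K(x))=x/|x|$; you should expect to need that identity (or an equivalent duality fact) rather than getting the exponent $n+1$ purely by degree-counting.
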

\begin{proof} 
Let us fix $x \in (\partial K)_+$. 
The differential $d_x G_K$of $G_K$ at $x\in \partial K$ 
is a linear map from the tangent space $T_x ( \partial K)$ to $T_{G_K(x)} ( \mathbb S^{n-1} )$. 
We can identify both spaces with $G_K(x)^\perp$ and view $d_x G_K$ as a linear operator on 
$G_K(x)^\perp$. Then by definition (see e.g., \cite{SchneiderBook})
\[
\kappa_K ( x ) = \det \, ( d_x G_K(x) ) . 
\] 
Let $f \colon x\in \R^n \mapsto   \Vert x \Vert_K$. For all $x\neq 0$, consider $N_K$, the $0$-homogeneous extension of $G_K$,  defined by $N_K ( x) = G_K \left( \frac{x}{ \Vert x \Vert_K } \right)$.
Then
\[
\nabla f(x) = \frac{ N_K (x) }{ \Vert N_K (x) \Vert _{K^\circ}}.
\] 
Using the identity 
\[
N_{K^\circ}  ( N_K ( x) ) = \frac{x}{\Vert x \Vert_2},
\]
we get 
\[
\Hess f(x) = \frac{ d_x  N_K }{ \Vert N_K (x) \Vert_{K^\circ} } - 
\frac{ ( (d_x N_K)^T x ) \otimes N_K (x) }{  \Vert N_K (x) \Vert^2_{ K^\circ } } 
\]
Therefore, if we put $A = d_x N_K$, $ u= N_K (x)$ and $a = \Vert N_K (x)\Vert_{K^\circ}$, 
\[
\Hess \psi(x) = \Hess ( f^2(x) /2 ) =
\frac A a -
\frac{  (A^T x) \otimes u }{  a^2 } 
+ \frac{ u \otimes u }{ a^2 } . 
\]
Let $B = d_x G_K$. Then $A y = B y$ for every $y \in u^\perp$. Since $N_K$ is $0$-homogeneous,   $A x = 0$.  Thus 
\[
A u = - \frac{ B x_\perp } {\langle x , u \rangle } = - \frac{ B x_\perp } a 
\]
where $x_\perp = x - \langle x, u \rangle u  \in u^\perp$. 
Also, as  $B$ is self-adjoint (see e.g. \cite{SchneiderBook}), 
\[
\begin{split}
\langle  A^T x , y \rangle 
&  = \langle x , B  y \rangle = \langle B x_\perp , y \rangle , \quad \forall y \in u^\perp  \\
\langle A^T x , u \rangle 
&  = - \frac 1 a \langle x , B x_\perp \rangle  = - \frac 1 a  \langle x_\perp , B x_\perp \rangle .
\end{split}
\]
The previous computations show that in a basis adapted to the decomposition 
$\R^n = \mathrm{span} (u) + u^\perp$, 
we have
\[
\Hess\psi (x) =
\frac 1 {a^3}
\left[
\begin{array}{cc}
a + \langle x_\perp , B x_\perp \rangle & - a ( B x_\perp )^T \\
- a B x_\perp & a^2 B
\end{array}
\right] .
\] 
Observe that
\[
\Hess \psi (x)
=
\frac 1 {a^3}
\left[
\begin{array}{cc}
a  & - ( B x_\perp )^T \\
0 & a B
\end{array}
\right] 
\times
\left[
\begin{array}{cc}
1  & 0  \\
- x_\perp & a\, \mathrm{id}_{n-1}
\end{array}
\right] .
\] 
Therefore, 
\[
\det \, ( \Hess \psi(x) ) = a^{-n-1} \det \, ( B ) = \frac{ \kappa_K (x) }{ \Vert N_K ( x) \Vert^{n+1}_{K^\circ}} ,
\]
which is the result. 
\end{proof}

\begin{proof}[Proof of Theorem \ref{norm}]
We will use  formula (\ref{2homoasa}) for $\psi=\frac{\|\cdot\|_K^2}{2}$ and integrate in polar coordinates with respect to the normalized cone measure  $\sigma_K$  of  $K$. Thus, if we write $x=r\theta$, with $\theta\in\partial K$, $dx=n|K|r^{n-1}drd\sigma_K(\theta)$. We also use that  the map  $x\mapsto\det \, \Hess  \psi (x) $ is $0$-homogeneous. Therefore  we get with (\ref{2homoasa}), 
\begin{eqnarray*}
 as_\lam\left(\frac{\|\cdot\|_K^2}{2}\right)&=&n|K|\int_0^{+\infty}r^{n-1}e^\frac{-r^2}{2}dr \int_{(\partial K)_+}  \left(\det \,\Hess  \psi (\theta) \right)^\lam \ d\sigma_K(\theta)\\
 &=& (2\pi)^\frac{n}{2}\frac{|K|}{|B_2^n|}\int_{(\partial K)_+}   \left(\det \, \Hess \psi (\theta) \right)^\lam \ d\sigma_K(\theta).
\end{eqnarray*}
The relation between the normalized cone measure $\sigma_K$ and the Hausdorff measure $\mu_K$ on $\partial K$ is given by 
$$
d\sigma_K(x)=\frac{\langle x,N_K(x)\rangle d\mu_K(x)}{n|K|}.
$$
Observe  that for the function $G_K(x)$ introduced in Lemma  \ref{Hesse},  $\Vert G_K(x) \Vert_{K^\circ} =\langle x,N_K(x)\rangle$. Thus, with   $\lam=\frac{p}{n+p}$, 
\begin{eqnarray*}
as_\lam\left(\frac{\|\cdot\|_K^2}{2}\right)&=& \frac{(2\pi)^\frac{n}{2}}{n|B_2^n|}\int_{(\partial K)_+}  \left(\frac{ \kappa (x) }{ \langle x,N_K(x)\rangle^{n+1} }\right)^\lam \langle x,N_K(x)\rangle d\mu_K(x)\\
&=& \frac{(2\pi)^\frac{n}{2}}{n|B_2^n|}as_p(K), 
\end{eqnarray*}
when $\lam \in [0,1)$ or when $(\partial K)_+$ is of full Lebesgue measure in $\partial K$.
\end{proof}

Let us conclude this section with several observations.
First, observe that 
\[
\int e^{- \frac{\|x\|_K^2}{2}} dx = 2^\frac{n}{2} \Gamma\left(1 + \frac{n}{2}\right) |K|.
\]
Combining this with Theorem \ref{norm} and  Corollary \ref{funcsurfineq}, we recover  the known $L_p$-affine isoperimetric inequalities for convex bodies.  Namely, for a convex body $K$ with the origin in its interior, we get for $\lambda \in [0,1)$, which corresponds to $p \in [0, \infty)$ ($\lambda$ and $p$ are related via $\lambda=\frac{p}{n+p}$), 
\begin{eqnarray*}
\frac{as_p(K)}{as_p(B^n_2)}\leq \left(\frac{|K|}{|B^n_2
|}\right)^{\frac{n-p}{n+p}},
\end{eqnarray*}
with equality if and only if $K$ is an ellipsoid. 
For $\lambda \in (- \infty, 0]$, which corresponds to  $p \in  (-n, 0]$, we use Corollary \ref{cor:classic} and get that for any $C_2^+$ convex body $K$, 
\begin{eqnarray*}
\frac{as_p(K)}{as_p(B^n_2)}\geq
\left(\frac{|K|}{|B^n_2|}\right)^{\frac{n-p}{n+p}},
\end{eqnarray*} with
equality if and only if $K$ is an ellipsoid
and if $ \lambda \geq 1$, which corresponds to  $ p \in [- \infty , -n)$, then
\begin{equation*}
c^{\frac{np}{n+p}}\left(\frac{|K|}{|B^n_2
|}\right)^{\frac{n-p}{n+p}} \leq \frac{as_p(K)}{as_p(B^n_2 )}, 
\end{equation*}
where $c$ is a universal constant.
For $p\geq1$ these inequalities were proved by Lutwak  \cite{Lutwak1996} and for all other $p$ 
by Werner and Ye \cite{WernerYe2008}.
\par
Second,
the functional definition $as_\lam \left(\frac{\|\cdot\|_K^2}{2}\right)$ and $as_p(K)$ may not coincide for $p < 0$. Indeed, if $\partial K \setminus (\partial K)_+$ has non zero Lebesgue measure then $as_p(K) = + \infty$ while it can happen that the corresponding functional definition is finite. The simplest example is the convex hull of  the point $(-e_1)$ with the half unit sphere $\{\sum x_i^2 = 1, x_1 \ge 0\}$. 
\par
Note  that   $\left(\frac{\|\cdot\|_K^2}{2}\right)^* = \frac{\|\cdot\|_{K^\circ}^2}{2}$, where $K^\circ=\{y \in \mathbb{R}^n: \langle x, y \rangle \leq 1 \  \forall x  \in K\}$ is the polar body of $K$. Thus the functional duality relation (\ref{asa-dual}) implies the identity
\[
\forall \lam \in \R, \
as_\lam \left(\frac{\|\cdot\|_K^2}{2}\right) = as_{1-\lam} \left(\frac{\|\cdot\|_{K^\circ}^2}{2} \right).
\]
Together with Theorem \ref{norm} and taken $\lam = p/(n+p)$, we get the classical  duality relation 
\[
as_p(K) = as_{\frac{n^2}{p}}(K^\circ)
\]
for any  $p>0$. Moreover, this  is also valid for any $p \ne -n$ when $(\partial K)_+$ has full measure in $\partial K$. This duality relation was proved in \cite{Hug1996} for $p >0$  and for all $p\neq -n$ in \cite{WernerYe2008}, with some more regularity assumption when $p <0$.

\section{The $L_p$-affine surface area for  $s$-concave functions.} 
\label{asa-sconcave}
The purpose of this section is to generalize Definition \ref{def:log}, the functional version of $L_p$-affine surface area, to the context of $s$-concave functions for $s >0$.  
We could have defined $F_1 ( t ) = F_2(t) = F^{(s)}(t)=( 1 - s t )_+^{1/s}, $
where $a_+ = \max\{a,0\}$.
Since $F^{(s)}$ is log-concave and non-increasing, one has according to \eqref{eq:defofF}, $F = F^{(s)}$ and when $s\to 0$, it recovers the previous case of $F(t)=e^{-t}$.  However, when $\psi$ is convex, $F \circ \psi$ and $F \circ \psi^*$ are not satisfying a good duality relation. Instead of the Legendre duality, we follow in this section another point of view, coming from the duality introduced in \cite{ArtKlarMil} for $s$-concave functions.  

\subsection{The $s$-concave duality.}
We need few notations to explain the definition. 
Let $s \in (0,+\infty)$ and $f: \mathbb{R}^n \rightarrow \mathbb{R}_+$.  Following Borell \cite{Borell1975}, 
we say that $f$ is $s$-concave if for every $\lambda \in [0,1]$ and all  $x$ and $y$ such that $f(x) >0$ and $f(y) > 0$,
\[
f( (1-\lam)x + \lam y) \ge \left( (1-\lam) f(x)^s + \lam f(y)^s \right)^{1/s}.
\]
Since $s>0$, it is equivalent to assuming that $f^s$ is concave on its support. For the construction, we assume that $f$ is upper semi-continuous.
Let $S_f$ be the  convex set $\{x: f(x) >0\}$ and assume that $0$ belongs to the interior of $S_f$. This can be done by choosing correctly the origin of the space $\R^n$ and by assuming that $f$ is not a trivial function. This will not affect the construction. We   define the $(s)$-Legendre dual of $f$ as 
\[
f_{(s)}^\circ(y)  = \inf_{x \in S_f}  \frac{(1 - s \langle x, y \rangle)_+^{1/s}}{f(x)}.
\]
It coincides with the definition introduced in \cite{ArtKlarMil, ArtMil}.
Another point of view is to define a  function $\psi$ on $S_f$  by
\begin{equation}\label{def:psi}
\psi(x)= \frac{1-f^s(x)}{s},   \   \ x\in S_f.
 \end{equation}
and to associate a new dual function $\ps$ defined by 
\begin{equation}
\label{def: psi-stern}
\ps (y) =  \sup_{x \in S_f} \frac{\langle x, y \rangle - \psi(x)}{1 - s \psi(x)}   
\end{equation}
As $f > 0$ on $S_f$, $\psi$ is well defined and since $f$ is $s$-concave, $\psi$ is convex on $S_f$. Observe that 
$ \psi < \frac{1}{s}$, which means that  $1 - s \psi > 0$ on $S_f$. 
We can now define the $(s)$-Legendre dual of $f$ as 
\[
f_{(s)}^\circ(y) = \left(1- s \ps(y)\right)^{1/s}, \quad \forall y \in S_{f_{(s)}^\circ}
\]
where $S_{f_{(s)}^\circ} = \{ y, 1 - s \ps(y) > 0\}$.
By definition, $f_{(s)}^\circ$ is $s$-concave and upper semi-continuous. It is not difficult to see that as for the Legendre transform, 
 $(f_{(s)}^\circ)_{(s)}^\circ = f$ or equivalently that $(\ps)_{(s)}^{\star} = \psi$. 
Moreover, it can be seen that for $s > 0$,   $S_{f_{(s)}^\circ} = \frac{1}{s} S_f^\circ = \{ z, \forall x \in S_f, \langle x,z \rangle < 1\}$.
 
There is an implicit relation between the classical Legendre function $\psi^*$ and the $(s)$-Legendre function $\ps$ given by the formula
\begin{equation}
\label{implicitrelation} 
\forall y \in  S_{f_{(s)}^\circ}, \
\left( 1 - s \ps (y) \right) 
\left( 1 + s \psi^* \left( \frac{y}{1 - s \ps (y)} \right) \right)
= 1.
\end{equation}

Our definition in the $s$-concave case is the following.
\begin{defn}
\label{def:s}
For any $s > 0$,  let $f$ be an $s$-concave function and $\psi$ be the convex function associated above. For any $\lam \in \R$, let 
\[
as_{\lam}^{(s)}(\psi) = \frac{1}{1+ns} \  \int_{X_\psi} \frac{\left(1-s \psi(x) \right)^{\left(\frac{1}{s}-1\right)(1-\lam)}
 \left(\det   \Hess  \psi (x) \right)^\lam}
 {\left(1+s(\langle x , \nabla \psi(x) \rangle -\psi(x))\right)^{\lam\left(n+\frac{1}{s}+1\right) - 1 }} \  dx. 
\]
\end{defn}
It does not correspond to Definition \ref{def:log} with particulars function $F_1$ and $F_2$. As in the log-concave case, we call it the $L_\lam$-affine surface area  of  an $s$-concave function $f$. This is motivated by two main reasons. Like in Theorem \ref{generalduality} , we prove in Theorem \ref{s-generalduality} a satisfactory duality relation, from which we deduce a reverse log-Sobolev inequality for $s$-concave measures. Moreover, in the case $s= 1/k > 0$ where $k$ is an integer, this functional affine surface area corresponds to an $L_p$-affine surface area of a convex body build apart from $f$ in dimension $n+k$. Indeed, as in \cite{ArtKlarMil},  
we associate the convex body $K_{s}(f)$  in 
$\mathbb{R}^{n+\frac{1}{s}}$,
$$
K_{s}(f) = \left\{(x,y) \in \R^n \times \R^\frac{1}{s}: \frac{x}{\sqrt {s} }    \in
S_f,  \  |y| \leq f^s\left(\frac{x}{\sqrt {s} }  \right)\right\}.
$$
Then the $L_\lam$-affine surface area of $f$ is the  $L_p$-affine surface area of $K_{s}(f)$ with $p=\left(n+\frac{1}{s}\right)\ \frac{\lam}{1-\lam}$, 
\begin{equation}\label{motiv1}
(1+ns) \  as_{\lambda}^{(s)} (\psi)  = \frac{as_{p} \left((K_s(f) \right)}{s^\frac{n}{2} \mbox{vol}_{\frac{1}{s}-1} \left(S^{\frac{1}{s}-1}\right) }.
\end{equation}
Identity (\ref{motiv1})  follows from Proposition 5 in  \cite{CaglarWerner}.
\\
Finally, we note that, as it is the case  for log-concave functions, the $L_\lam$-affine surface area for $s$-concave functions  is  also  affine invariant under the action of $SL_n$ and has a degree of homogeneity.

\begin{theo} 
\label{s-generalduality} 
Let  $f$ be a an upper semi-continuous $s$-concave function with its corresponding convex function $\psi$. Assume that $0 \in S_f$. Let $\lam \in \R$ then
\[
as_{1-\lam}^{(s)}(\ps) = as_\lam^{(s)}(\psi).
\]
\end{theo}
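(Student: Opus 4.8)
The plan is to mimic the proof of Theorem~\ref{generalduality}, replacing the classical Legendre transform by the $(s)$-Legendre transform and the McCann change of variables by the same change of variables read through the implicit relation~\eqref{implicitrelation}. First I would record the infinitesimal consequences of~\eqref{implicitrelation}: writing $y\in S_{f_{(s)}^\circ}$ and $u = y/(1-s\ps(y))$, the relation says $(1-s\ps(y))(1+s\psi^*(u))=1$; differentiating gives a formula expressing $\nabla\ps(y)$ in terms of $\nabla\psi^*(u)$ (up to the scalar factor $1-s\ps(y)$), and differentiating once more yields $\Hess\ps(y)$ in terms of $\Hess\psi^*(u)$, again with powers of $1-s\ps(y)$ and of $1+s\psi^*(u)$ as multiplicative factors. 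In particular the Jacobian of $y\mapsto u$ will contribute a determinant factor, and combining this with McCann's identity~\eqref{mccannbis} for $\psi$ (i.e.\ $x=\nabla\psi^*\circ\nabla\psi(x)$ and $\Hess\psi^*(\nabla\psi(x))=(\Hess\psi(x))^{-1}$ on $X_\psi$) turns an integral over $X_\psi$ against $\det\Hess\psi$ into an integral over $X_{\psi^*}$, which I then transport to $X_{\ps}$ via the diffeomorphism $u\mapsto y=u/(1+s\psi^*(u))$ coming from~\eqref{implicitrelation}.

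Concretely, I would start from the definition of $as_\lam^{(s)}(\psi)$, use~\eqref{legendreequality} to write $\langle x,\nabla\psi(x)\rangle-\psi(x)=\psi^*(\nabla\psi(x))$ so that the integrand becomes a function of $\psi(x)$, $\psi^*(\nabla\psi(x))$ and $\det\Hess\psi(x)$, then apply McCann's relations to rewrite $\psi(x)=\psi((\psi^*)^*)(\nabla\psi^*(y))$ evaluated along $y=\nabla\psi(x)$ and $\det\Hess\psi(x)=(\det\Hess\psi^*(y))^{-1}$, and invoke~\eqref{mccannbis} to pass to $\int_{X_{\psi^*}}\cdots\,dy$. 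At this point the integrand involves only $\psi^*(y)$, $\psi((\psi^*)^*)\circ\nabla\psi^*(y)$ and $\det\Hess\psi^*(y)$; using~\eqref{implicitrelation} to substitute $\psi^*(y)$ in terms of $\ps$ and its argument, and the previously computed derivative/Hessian formulas for the change $y\leftrightarrow$ (variable of $\ps$), I expect everything to collapse—after tracking all the powers of $(1-s\,\cdot\,)$ and $(1+s\,\cdot\,)$—into exactly the integral defining $as_{1-\lam}^{(s)}(\ps)$. The normalizing constant $1/(1+ns)$ is the same on both sides, so it plays no role. One should also check the set-theoretic matching $X_\psi\leftrightarrow X_{\psi^*}\leftrightarrow X_{\ps}$, which follows from McCann together with the fact that $u\mapsto u/(1+s\psi^*(u))$ is a diffeomorphism onto $S_{f_{(s)}^\circ}$ preserving second-order differentiability a.e.

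The main obstacle will be the bookkeeping in the second step: getting the exponents right. The exponents $(\tfrac1s-1)(1-\lam)$ on $(1-s\psi)$ and $\lam(n+\tfrac1s+1)-1$ on $(1+s(\langle x,\nabla\psi\rangle-\psi))$ in Definition~\ref{def:s} are precisely engineered so that, under the substitution dictated by~\eqref{implicitrelation} (which trades a factor $1-s\ps$ for $(1+s\psi^*)^{-1}$ and introduces a Jacobian $(1+s\psi^*)^{-(n+1)}$-type factor from the radial scaling in $\R^n$), the roles of $1-s\psi$ and $1+s(\langle x,\nabla\psi\rangle-\psi)$ swap while $\lam\mapsto 1-\lam$. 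I would verify this first in the smooth, strictly convex case, where~\eqref{mccannbis} is just the elementary change of variables $y=\nabla\psi(x)$ and~\eqref{implicitrelation} can be differentiated freely, pin down the exact algebraic identity among the exponents, and then note that the general case follows verbatim by quoting Corollary~4.3 and Proposition~A.1 of~\cite{mccann} exactly as in the proof of Theorem~\ref{generalduality}. Alternatively, in the special case $s=1/k$ with $k$ integer, one can bypass the computation entirely: by~\eqref{motiv1} the statement reduces to the convex-body duality $as_p(K)=as_{n'^2/p}(K^\circ)$ in dimension $n'=n+k$ (with $K=K_s(f)$ and $K^\circ$ matching $K_s(f_{(s)}^\circ)$ up to the scaling by $\sqrt s$), which is the identity already established via Theorem~\ref{norm} and~\eqref{asa-dual}; but since we want all real $s>0$, the direct argument above is the one I would write up.
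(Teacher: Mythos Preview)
Your proposal is correct and follows essentially the same route as the paper: first rewrite $\langle x,\nabla\psi(x)\rangle-\psi(x)=\psi^*(\nabla\psi(x))$, apply McCann's change of variables~\eqref{mccannbis} to pass from $X_\psi$ to $X_{\psi^*}$, and then use the diffeomorphism $u\mapsto u/(1+s\psi^*(u))$ induced by~\eqref{implicitrelation} to land on $X_{\ps}$, after which the exponents collapse to the definition of $as_{1-\lam}^{(s)}(\ps)$. The paper likewise treats the smooth case first (with the direct change of variable $y=T_\psi(x)=\nabla\psi(x)/(1+s\psi^*(\nabla\psi(x)))$) before invoking McCann for the general case, so your plan to pin down the algebra under smoothness and then quote~\cite{mccann} is exactly what is done.
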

\begin{proof}
Let us start with the case when $f$ is sufficiently smooth, say $f$ is twice continuously differentiable on $S_f$ and its Hessian is non zero. Then $\psi$ is
$\mathcal C^2_+$ on $\Omega_\psi$ and
\begin{equation}
\label{eq:1}
as_{\lam}^{(s)}(\psi) = \frac{1}{1+ns} \  \int_{\Omega_\psi} \frac{\left(1-s \psi(x) \right)^{\left(\frac{1}{s}-1\right)(1-\lam)}
 \left(\det   \Hess  \psi (x) \right)^\lam}
 {\left(1+s(\langle x , \nabla \psi(x) \rangle -\psi(x))\right)^{\lam\left(n+\frac{1}{s}+1\right) - 1 }} \  dx. 
\end{equation}
A simple computation tells that the supremum  in (\ref{def: psi-stern}) is attained at the point $x \in S_f$ such that
\begin{equation*}
\label{eq:attained}
y = \frac{1 - s \langle x, y \rangle}{1 - s \psi(x)} \, \nabla \psi(x)
\hbox{ which means } y = (1 - s \ps (y)) \nabla \psi (x). 
\end{equation*}
Therefore $\langle x,y \rangle = \frac{1 - s \langle x, y \rangle}{1 - s \psi(x)} \, \langle x, \nabla \psi(x)\rangle$ and we get that 
\begin{equation}
\label{eq:gradient}
\frac{1}{1 - s \ps(y)} = \frac{1- s \psi(x)}{1 - s \langle x,y \rangle} = 1 + s ( \langle \nabla \psi(x), x \rangle - \psi(x)).
\end{equation}
Finally, we get that 
\[
\ps (y) = \frac{\langle x, y \rangle - \psi(x)}{1 - s \psi(x)}  
\]
 if and only if
\[
y = \frac{\nabla \psi(x)}{1 + s ( \langle \nabla \psi(x), x \rangle - \psi(x))} = \frac{\nabla \psi(x)}{1 + s \psi^*( \nabla \psi(x))}.
\]
We define the change of variable 
\begin{equation}
\label{eq:changevariable}
\frac{\nabla \psi(x)}{1 + s ( \langle \nabla \psi(x), x \rangle - \psi(x))}  = T_\psi (x).\
\end{equation}
A straightforward computation shows that 
\[
d_x T_\psi = \frac{1}{1 + s \psi^*(\nabla \psi(x))}  \left( {\mathrm Id}  - \frac{s}{1 + s \psi^*(\nabla \psi(x))} x \otimes \nabla \psi(x) \right) \Hess \psi(x).
\]
Since 
\[
\det \left( {\mathrm Id}  - \frac{s}{1 + s \psi^*(\nabla \psi(x))} x \otimes \nabla \psi(x) \right)
=
1 - \frac{s}{1 + s \psi^*(\nabla \psi(x))} \langle x, \nabla \psi(x) \rangle
\]
we get that the the Jacobian of $T_\psi$ at $x$ is given by 
\begin{equation}
\label{eq:Jacobian2}
dy = \left| \det \, d_x T_\psi \right| dx =  \frac{1 - s \psi(x)}{\left(1 + s (\langle \nabla \psi(x), x \rangle - \psi(x))\right)^{n+1}} \ \det \Hess \psi(x) \ dx.
\end{equation}
As the  the duality $(\ps)_{(s)}^{\star} = \psi$ holds, we see that $T_{\psi}  \circ T_{\ps} =  {\mathrm Id}$ and $T_{\ps}  \circ T_{\psi} = {\mathrm Id}$ from which it is easy to deduce that for $y = T_\psi(x)$, 
\begin{equation}
\label{eq:JacobianDual}
\det \, \left(d_x T_{\psi}\right) \det \,  \left( d_{y} T_{\ps} \right)= 1.
\end{equation}
We make the change of variable $y = T_\psi(x)$ in formula \eqref{eq:1}. From \eqref{eq:gradient} and the fact that $(\ps)_{(s)}^{\star} = \psi$, we have 
\[
\frac{1}{1 - s \ps(y)} = 1 + s ( \langle \nabla \psi(x), x \rangle - \psi(x))
\hbox{ and }
\frac{1}{1 - s \psi(x)} = 1 + s ( \langle \nabla \ps(y), y \rangle - \ps(y)).
\] 
Combining with  \eqref{eq:Jacobian2} and \eqref{eq:JacobianDual} we get that 
\begin{equation}
\label{eq:dualityHessian}
\det \Hess \psi(x) \left( \frac{1 - s \ps(y)}{1 + s(\langle \nabla \ps(y), y \rangle - \ps(y)}  \right)^{n+2} \det \Hess \ps(y) = 1.
\end{equation}
Posing $y = T_\psi(x)$ we get 
\begin{align*}
(1+ ns) \ as_{\lam}^{(s)}(\psi) & =  \int_{\Omega_{\psi}} \frac{\left(1-s \psi(x) \right)^{\left(\frac{1}{s}-1\right)(1-\lam)-1}
 \left(\det   \Hess  \psi (x) \right)^{\lam-1}}
 {\left(1+s(\langle x , \nabla \psi(x) \rangle -\psi(x))\right)^{(\lam-1)(n+1) + \frac{\lam}{s} - 1 }} \  \left| \det \, d_x T_\psi \right| dx
 \\
 & = \int_{\Omega_{\ps}} \frac{\left(1-s \ps(y) \right)^{(n+2)(1-\lam) + (\lam - 1)(n+1) + \frac{\lam}{s} - 1}
 \left(\det   \Hess  \ps(y) \right)^{1-\lam}}
 {\left(1+s(\langle y , \nabla \ps(y) \rangle -\ps(y))\right)^{(n+2)(1-\lam) + \left( \frac{1}{s} - 1\right) (1 - \lam) - 1 }}  dy
 \\
 & = \int_{\Omega_{\ps}} \frac{\left(1-s \ps(y) \right)^{\lam \left(\frac{\lam}{s} - 1\right)}
 \left(\det   \Hess  \ps(y) \right)^{1-\lam}}
 {\left(1+s(\langle y , \nabla \ps(y) \rangle -\ps(y))\right)^{ \left( n + 1 + \frac{1}{s} \right) (1 - \lam) - 1 }}  dy
 \\
& = (1+ns) \ as_{1-\lam}^{(s)}(\ps).
\end{align*}
This concludes the proof in the smooth case. In the full generality, we need several observations. By \eqref{legendreequality}, we have a.e. in $\Omega_\psi$, 
\[
\left(1+s(\langle x , \nabla \psi(x) \rangle -\psi(x))\right) = 1 + s \psi^*(\nabla \psi(x))
\]
Therefore, we can use a result of Mc Cann \cite{mccann},  see \eqref{mccannbis}, to get 
\begin{align}
\nonumber
(1+ ns) \ as_{\lam}^{(s)}(\psi) & =  
\int_{X_\psi} \frac{\left(1-s \psi(x) \right)^{\left(\frac{1}{s}-1\right)(1-\lam)}
 \left(\det   \Hess  \psi (x) \right)^\lam}
 {\left(1+s \psi^*(\nabla \psi(x)) \right)^{\lam\left(n+\frac{1}{s}+1\right) - 1 }} \  dx
 \\
 \label{after1}
 & = \int_{X_{\psi^*}} \frac{\left(1-s \psi(\nabla \psi^*(z)) \right)^{\left(\frac{1}{s}-1\right)(1-\lam)}
 \left(\det   \Hess  \psi^* (z) \right)^{1-\lam}}
 {\left(1+s \psi^*(z) \right)^{\lam\left(n+\frac{1}{s}+1\right) - 1 }} \  dz.
 \end{align}
We make the change of variable  $z = T(y) = \frac{y}{1 - s \ps(y)}$.  Since $1 - s \ps$ is convex, it is not difficult to see that $T$ is an injective map. 
From \eqref{implicitrelation}, our change of variable is equivalent to 
$y = \frac{z}{1 + s \psi^*(z)}$. Therefore,
a.e. in $\Omega_{\ps}$, a similar computation to \eqref{eq:Jacobian2} gives 
\begin{equation}
\label{eq:Jacobian3}
|\det \ d_y T| = \frac{1 + s (\ps)^*(\nabla \ps(y))}{(1 - s \ps(y))^{n+1}}. 
\end{equation}
It can also be proved that it maps $X_{\psi}$ to $X_{\ps}$ and that the Alexandrov derivatives satisfy (this is similar to proposition A.1 in \cite{mccann}) 
\begin{equation}
\label{psips}
 \left( \frac{1 - s \ps(y)}{1 + s (\ps)^* (\nabla \ps(y))}  \right)^{n+2} \det \Hess \ps(y) = \det \Hess \psi^*(z).
\end{equation}
Since $(\ps)_{(s)}^{\star} = \psi$, we deduce from \eqref{implicitrelation} that 
\[
\forall x \in S_f, \
(1 - s \psi(x)) 
\left( 1 + s (\ps)^* \left( \frac{x}{1 - s \psi(x)} \right) \right) = 1
\]
Using \eqref{implicitrelation}  and the definition of $T$, it is not difficult to prove that a.e. in $\Omega_{\psi^*}$, 
\[
\frac{\nabla \psi^*(z)}{1 - s \psi(\nabla \psi^*(z))} = \nabla \ps (y), \ \hbox{ for } \ z = T y
\]
which shows that for $z = Ty$, 
\begin{equation}
\label{eq:dual-ps-psi}
1 - s \psi (\nabla \psi^*(z)) = \frac{1}{1 + s (\ps)^*(\nabla \ps (y))}.
\end{equation}
We have all the tools in hand to make the change of variable $z = T(y)$ in \eqref{after1} and to deduce from \eqref{eq:Jacobian3}, \eqref{psips}, \eqref{eq:dual-ps-psi} that 
\[
(1+ ns) \ as_{\lam}^{(s)}(\psi) 
=
\int_{X_{\ps}}  \frac{\left(1-s \ps(y) \right)^{\lam \left(\frac{\lam}{s} - 1\right)}
 \left(\det   \Hess  \ps(y) \right)^{1-\lam}}
 {\left(1+s(\langle y , \nabla \ps(y) \rangle -\ps(y))\right)^{ \left( n + 1 + \frac{1}{s} \right) (1 - \lam) - 1 }}  dy.
\]
This finishes the proof of the duality relation in the general case.
\end{proof}
\subsection{Consequences of the duality relation}
\label{sec:ConsDuality}
In this section, we suppose that $f$ satisfies more regularity assumptions: it is twice continuously differentiable on $S_f$,  its Hessian is non zero on $S_f$,  $\lim_{x \rightarrow \partial S_f} f^s(x)=0$ and recall that the origin belongs to the interior of $S_f$. With such assumptions, $X_\psi = S_f$ and $X_{\ps} = S_{f_{(s)}^{\circ}}$ and we remark that 
the definition of $as_{\lambda}^{(s)}(\psi)$ is made in such a way that 
\begin{equation}
\label{def:asa-0}
as_0^{(s)} (\psi) = \int_{S_f} f(x) dx 
\ \hbox{ and } \
as_1^{(s)} (\psi) = \int_{S_{f_{(s)}^{\circ}}} f_{(s)}^{\circ} (y) dy.
\end{equation}
Indeed, 
\begin{eqnarray*} 
as_{0}^{(s)}(\psi) &=& \frac{1}{1+ns} \  \int \left(1-s \psi(x)  \right)^{\frac{1}{s}-1}\left(1+s(\langle\grad \psi(x), x \rangle -\psi(x))\right)\  dx \nonumber \\
&= & \frac{1}{1+ns} \  \int  f(x) \left(1- s
\frac{\langle\grad f(x), x \rangle}{f(x)}\right)\  dx = \int f(x) dx, 
\end{eqnarray*}
where the last equality follows from Stokes formula and the fact that $\lim_{x \rightarrow \partial S_f} f^s(x)=0$. The second relation follows from the duality relation proved in Theorem \ref{s-generalduality}.  

In a way similar to the proof of Corollary  \ref{generalholder} and Theorem \ref{rev-log-sob}, it is possible to deduce from Theorem \ref{s-generalduality} some isoperimetric inequalities and a general reverse log-Sobolev inequality in the $s$-concave setting. 
 
\begin{prop} \label{Ungleichung1} Let  $f$ be a an $s$-concave function  that satisfies the regularity assumption defined at the beginning of Section 
\ref{sec:ConsDuality} and $\psi$ be  its associated convex function. Then
\[
\begin{split}
\forall \lambda \in [0,1], 
&\quad as_{\lambda}^{(s)} ( \psi )  \leq  \left(\int_{\R^n}f \  dx \right)^{1-\lambda}
 \left( \int_{\R^n}  f^\circ_{(s)}  \  dx \right)^\lambda  ; \\
\forall \lambda \notin [0,1], 
& \quad as_{\lambda}^{(s)} ( \psi )  \geq  \left(\int_{\R^n}f \  dx \right)^{1-\lambda}
 \left( \int_{\R^n}  f^\circ_{(s)}  \  dx \right) ^\lambda.
\end{split}
\]
\end{prop}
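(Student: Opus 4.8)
The plan is to mimic the proof of Corollary~\ref{generalholder}, using the duality relation of Theorem~\ref{s-generalduality} together with H\"older's inequality. First I would observe that the integrand defining $as_\lambda^{(s)}(\psi)$ splits multiplicatively in $\lambda$: writing
\[
g_\psi(x) = \left(1-s\psi(x)\right)^{\frac{1}{s}-1}\left(1+s(\langle x,\nabla\psi(x)\rangle-\psi(x))\right),
\]
and recalling from the smooth-case computation in the proof of Theorem~\ref{s-generalduality} (see~\eqref{eq:Jacobian2}) that
\[
h_\psi(x) = \frac{\det\Hess\psi(x)}{\left(1+s(\langle x,\nabla\psi(x)\rangle-\psi(x))\right)^{n+1+\frac{1}{s}}}\cdot\left(1-s\psi(x)\right)
\]
is exactly the Jacobian-type weight so that $\int_{X_\psi} h_\psi = \int_{S_{f_{(s)}^\circ}} f_{(s)}^\circ$, one checks that
\[
(1+ns)\,as_\lambda^{(s)}(\psi) = \int_{X_\psi} g_\psi(x)^{1-\lambda}\, h_\psi(x)^{\lambda}\, dx,
\]
after matching the exponents (the exponent $\lambda(n+\frac{1}{s}+1)-1$ in the denominator of Definition~\ref{def:s} is precisely what makes this identity hold). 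This is the algebraic heart of the reduction and should be done carefully but is routine.

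Once this product form is established, H\"older's inequality with conjugate exponents $\frac{1}{1-\lambda}$ and $\frac{1}{\lambda}$ (valid for $\lambda\in[0,1]$, with the reverse H\"older inequality for $\lambda\notin[0,1]$) gives, for $\lambda\in[0,1]$,
\[
(1+ns)\,as_\lambda^{(s)}(\psi) \le \left(\int_{X_\psi} g_\psi\right)^{1-\lambda}\left(\int_{X_\psi} h_\psi\right)^{\lambda},
\]
and the reverse inequality for $\lambda\notin[0,1]$. By the endpoint identities~\eqref{def:asa-0}, namely $\int_{X_\psi} g_\psi = (1+ns)\,as_0^{(s)}(\psi) = (1+ns)\int_{\R^n} f\,dx$ and $\int_{X_\psi} h_\psi = (1+ns)\,as_1^{(s)}(\psi) = (1+ns)\int_{\R^n} f_{(s)}^\circ\,dx$ (the latter using Theorem~\ref{s-generalduality} at $\lambda=1$), substituting and dividing by $1+ns$ yields exactly the claimed inequalities.

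I would handle the convexity statement implicitly as a byproduct: the product form above shows $\lambda\mapsto\log\big((1+ns)as_\lambda^{(s)}(\psi)\big)$ is convex on $\R$ by H\"older, and then the inequality for $\lambda\in[0,1]$ is just convexity between the endpoints $\lambda=0$ and $\lambda=1$, while for $\lambda\notin[0,1]$ it is the fact that a convex function lies above its chords extended outside the interval. Either route — direct H\"older or log-convexity — works; I would present the log-convexity version since it parallels Corollary~\ref{generalholder} most cleanly and makes the $\lambda\notin[0,1]$ case transparent.

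The main obstacle is purely bookkeeping: verifying that the exponents in Definition~\ref{def:s} factor correctly as $g_\psi^{1-\lambda}h_\psi^\lambda$, i.e. that
\[
\left(\tfrac{1}{s}-1\right)(1-\lambda) = \left(\tfrac{1}{s}-1\right) + \lambda\cdot 0 \quad\text{(coefficient of }\log(1-s\psi))
\]
together with the matching of the $\big(1+s(\langle x,\nabla\psi\rangle-\psi)\big)$-exponent, $-\big(\lambda(n+\frac{1}{s}+1)-1\big) = (1-\lambda)\cdot 1 - \lambda(n+1+\frac{1}{s})$, holds — which it does by direct arithmetic. One should also note that the regularity assumptions at the start of Section~\ref{sec:ConsDuality} guarantee $X_\psi = S_f$, that all the integrals are over the same domain $S_f$ so that H\"older applies without measurability subtleties, and that the endpoint integrals are finite (from~\eqref{def:asa-0} and the functional Santal\'o-type finiteness), so the H\"older step is legitimate. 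No genuinely hard analytic point arises beyond these verifications.
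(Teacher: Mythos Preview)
Your approach is exactly the paper's: write the integrand of $(1+ns)\,as_\lambda^{(s)}(\psi)$ as $g_\psi^{1-\lambda}h_\psi^\lambda$, apply H\"older (or equivalently log-convexity in $\lambda$), and identify the two endpoint integrals with $(1+ns)\int f$ and $(1+ns)\int f_{(s)}^\circ$ via~\eqref{def:asa-0}. The only issue is that the explicit $h_\psi$ you wrote down is wrong. The correct choice is simply the integrand of Definition~\ref{def:s} at $\lambda=1$,
\[
h_\psi(x)=\frac{\det\Hess\psi(x)}{\bigl(1+s(\langle x,\nabla\psi(x)\rangle-\psi(x))\bigr)^{\,n+\frac{1}{s}}},
\]
so that $\int_{X_\psi}h_\psi=(1+ns)\,as_1^{(s)}(\psi)=(1+ns)\int f_{(s)}^\circ$ directly from~\eqref{def:asa-0}. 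Your $h_\psi$ carries an extra factor $(1-s\psi)/\bigl(1+s(\langle x,\nabla\psi\rangle-\psi)\bigr)$; this is exactly $f_{(s)}^\circ(T_\psi(x))\,|\det d_xT_\psi|/h_\psi(x)$ coming from~\eqref{eq:Jacobian2}, and while its integral does equal $\int f_{(s)}^\circ$ (without the $1+ns$), the product $g_\psi^{1-\lambda}h_\psi^\lambda$ then no longer matches the integrand. Indeed, both exponent identities you wrote are false as stated: the first has right side independent of $\lambda$, and in the second the two sides differ by $\lambda$. With the corrected $h_\psi$ the bookkeeping is the triviality $(1-\lambda)(\tfrac{1}{s}-1)+\lambda\cdot 0=(\tfrac{1}{s}-1)(1-\lambda)$ and $(1-\lambda)\cdot 1-\lambda(n+\tfrac{1}{s})=1-\lambda(n+\tfrac{1}{s}+1)$, and your argument goes through verbatim.
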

\begin{proof}
 We use H\"older inequality, (\ref{def:asa-0}) to prove the first inequality.   
\begin{eqnarray*}
as_{\lambda}^{(s)} ( \psi ) & \leq&  \frac{1}{1 + ns} \bigg[ \left( 
\int_{\R^n} \big( 1 - s \psi(x) \big)^{ \frac{1}{s} -1}  \  \big( 1 -  s \psi(x) + s \langle x, \nabla \psi(x)\rangle   \big) \  dx\right)^{1-\lambda}  \\ 
&& \hskip 29mm  \left( \int_{\R^n}  \frac{   \det\nabla^2 \psi(x)  }{\big(1 -  s \psi(x)  + s \langle x, \nabla \psi(x)\rangle   \big)^{n+ \frac{1}{s}} } \ dx  \right)^{\lambda} \bigg] \\
 &=& \left(\int_{\R^n}f \  dx \right)^{1-\lambda}
 \left( \int_{\R^n}  f^\circ_{(s)}  \  dx \right)^\lambda .
\end{eqnarray*}
Similarly, we use reverse H\"older inequality to prove the second inequality.
\end{proof}

The next theorem gives the log-Sobolev inequality for $s$-concave functions. There,  we put 
$$
d \mu =\left(1-s \psi\right)^{\left(\frac{1}{s}-1\right)}\left(1+s(\langle\grad \psi,x \rangle -\psi)\right) \  \frac{dx}{1+ns}.
$$
By (\ref{def:asa-0}),  $\mu$ is a probability measure on $\mathbb{R}^n$. 
We let $
S(\mu) = \int  -\log \left(\frac{d \mu}{dx }\right) d\mu$ be the Shannon entropy of $\mu$.
\begin{theo} 
\label{th:slogSob}  Let  $f$ be a an $s$-concave function  that satisfies the regularity assumption defined at the beginning of Section 
\ref{sec:ConsDuality} and $\psi$ be  its associated convex function. Assume moreover that   $f$ is even and  that $\int  f(x) dx =1$.  Then
\begin{eqnarray}\label{Ungleichung1000}
\int \log \left(\det \left( \Hess \psi(x) \right)  \right) d\mu  \leq \int \log \left( \left(1+s(\langle x, \nabla \psi(x) \rangle -\psi(x))\right)^{\frac{1}{s } +n } \right) d\mu  -  S(\mu)\nonumber 
 \\
+ \log \left(  \left(\frac{\pi}{s}\right)^n  \frac{(1+ns) \left(\Gamma(1+\frac{1}{2s})\right)^2}{ \left(\Gamma(1+\frac{n}{2}+\frac{1}{2s})\right)^2} \right).
\end{eqnarray} 
There is equality if and only if  there is a positive definite matrix $A$ such that $f(x) = c_0  \left(1-  s \left| Ax\right|^2\right)^\frac{1}{2s}$, where $
 c_0  =  \left(\frac{\pi}{s}\right)^{-\frac{n}{2}} \left(  \frac{\Gamma(1+\frac{1}{2s})}{\Gamma(1+\frac{n}{2}+\frac{1}{2s})}\right)^{-1} $.
\end{theo}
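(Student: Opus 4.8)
The plan is to reproduce, in the $s$-concave setting, the three-step scheme used for Theorem~\ref{rev-log-sob}: extract a sharp integral inequality from a Blaschke--Santal\'o-type inequality, take logarithms and apply Jensen's inequality against the probability measure $\mu$, and then reorganise the resulting terms into the Shannon entropy $S(\mu)$. First I would record the two endpoint identities: under the regularity assumptions of Section~\ref{sec:ConsDuality}, equation~(\ref{def:asa-0}) gives $as_0^{(s)}(\psi)=\int_{\R^n}f\,dx=1$ and $as_1^{(s)}(\psi)=\int_{\R^n}f^\circ_{(s)}\,dx$, so it suffices to bound $\int f^\circ_{(s)}$ from above. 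This is exactly the Blaschke--Santal\'o inequality for even $s$-concave functions,
\[
\Big(\int_{\R^n}f\,dx\Big)\Big(\int_{\R^n}f^\circ_{(s)}\,dx\Big)\le\Big(\int_{\R^n}(1-s|x|^2)_+^{1/(2s)}\,dx\Big)^2=\Big(\tfrac{\pi}{s}\Big)^n\frac{\Gamma(1+\tfrac1{2s})^2}{\Gamma(1+\tfrac n2+\tfrac1{2s})^2}=c_0^{-2},
\]
with equality if and only if $f(x)=c\,(1-s|Ax|^2)_+^{1/(2s)}$ for some $c>0$ and some positive definite $A$ (for integer $1/s$ this reduces, via the lifting to the convex body $K_s(f)\subset\R^{n+1/s}$ and its polar, to the classical even Santal\'o inequality; in general it is the $s$-concave functional Santal\'o inequality, whose Santal\'o point is the origin because $f$ is even). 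Since $\int f=1$ this yields $as_1^{(s)}(\psi)\le c_0^{-2}$.

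Next, unwinding Definition~\ref{def:s} at $\lambda=1$ and rewriting $dx$ in terms of $d\mu=(1-s\psi)^{1/s-1}\big(1+s(\langle\grad\psi,x\rangle-\psi)\big)\,dx/(1+ns)$, a direct computation gives the identity
\[
as_1^{(s)}(\psi)=\int_{\R^n}\frac{\det\Hess\psi(x)}{\big(1+s(\langle x,\grad\psi(x)\rangle-\psi(x))\big)^{n+\frac1s+1}\,(1-s\psi(x))^{\frac1s-1}}\,d\mu(x)\le c_0^{-2}.
\]
Taking logarithms and applying Jensen's inequality (legitimate since $\mu$ is a probability measure and $\log$ is concave) produces
\[
\int\log\det\Hess\psi\,d\mu\le-2\log c_0+\Big(n+\tfrac1s+1\Big)\int\log\big(1+s(\langle x,\grad\psi\rangle-\psi)\big)d\mu+\Big(\tfrac1s-1\Big)\int\log(1-s\psi)\,d\mu.
\]

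To finish, I would feed in the entropy. From the explicit density of $\mu$ one has $-S(\mu)=\int\log\tfrac{d\mu}{dx}\,d\mu=(\tfrac1s-1)\int\log(1-s\psi)\,d\mu+\int\log(1+s(\langle\grad\psi,x\rangle-\psi))\,d\mu-\log(1+ns)$; substituting this into the previous display converts the right-hand side exactly into $\big(\tfrac1s+n\big)\int\log\big(1+s(\langle x,\grad\psi\rangle-\psi)\big)\,d\mu-S(\mu)+\log\big((1+ns)c_0^{-2}\big)$, which is the asserted bound once $(1+ns)c_0^{-2}=(\pi/s)^n(1+ns)\Gamma(1+\tfrac1{2s})^2/\Gamma(1+\tfrac n2+\tfrac1{2s})^2$ is recalled. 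For the equality case, tracing the argument shows that equality in the theorem forces equality both in Jensen's inequality (so the integrand in the second display is $\mu$-a.e.\ constant) and in the Santal\'o inequality; the latter alone already forces $f(x)=c(1-s|Ax|^2)_+^{1/(2s)}$, and imposing $\int f=1$ with $f$ even pins down $c=c_0$ and $A$ positive definite, while conversely a short computation shows this $f$ saturates all the inequalities used.

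The only genuinely non-routine ingredient is the Blaschke--Santal\'o input in the first paragraph: one needs the $s$-concave functional Santal\'o inequality with the sharp constant $c_0^{-2}$ together with a complete description of its equality cases. This is delicate when $1/s$ is not an integer (so the convex-body lifting $K_s(f)$ is unavailable) and requires some care with the $\tfrac1s$-dilation relating $S_{f^\circ_{(s)}}$ to the polar $S_f^\circ$, and with the identification of the extremisers. Granting that, Steps 2 and 3 are purely computational: the one identity to check is that $as_1^{(s)}(\psi)$ equals the $\mu$-integral displayed above, after which the entropy bookkeeping is forced and the equality analysis is immediate.
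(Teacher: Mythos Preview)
Your proof is correct and follows essentially the same route as the paper: bound $as_1^{(s)}(\psi)=\int f^\circ_{(s)}$ via the functional Blaschke--Santal\'o inequality, apply Jensen against $\mu$, and recognise the entropy term. The algebra is organised slightly differently (you first rewrite the integrand against $d\mu$ and only afterwards introduce $S(\mu)$, whereas the paper keeps the factor $dx/d\mu$ inside Jensen so that $S(\mu)$ appears immediately), but the two computations are line-by-line equivalent.

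One clarification on the point you flag as ``non-routine'': there is no separate $s$-concave Santal\'o inequality to establish, and no need for the convex-body lifting $K_s(f)$ even when $1/s$ is not an integer. The paper simply observes, from the definition of $\psi^\star_{(s)}$, that $f(x)f^\circ_{(s)}(y)\le(1-s\langle x,y\rangle)_+^{1/s}=\rho(\langle x,y\rangle)^2$ with $\rho(t)=(1-st)_+^{1/(2s)}$, and then invokes the general functional Santal\'o inequality \eqref{hypsantalo}--\eqref{concsantalo} (with $z_0=0$ since $f$ is even) together with its equality characterisation from \cite{FradeliziMeyer2007}. So the ingredient you were worried about is already available in full generality from the results quoted earlier in the paper.
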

\noindent
{\bf Remark.}   $S(\gamma_n)= \log\left(2 \pi e\right)^\frac{n}{2}$.  Therefore,   the right hand side the inequality (\ref{Ungleichung1000}) tends to $2 \left[ S(\gamma_n)- S(\mu)\right]$
for $s\rightarrow 0$ 
and we recover the inequality of Theorem \ref{rev-log-sob}.

\begin{proof} 
The proof follows the line of the proof of Theorem \ref{rev-log-sob} presented in Section \ref{sec:shortproof}. 
By the definition (\ref{def: psi-stern}) of $\ps$, we have for all $x \in S_f$ and  for all $y \in \frac{S_f^\circ}{s}$ that
\[
f(x) f_{(s)}^{\circ}(y) = (1- s\psi(x)) ^\frac{1}{s} \  (1-s \ps(y)) ^\frac{1}{s} \leq \left( 1 - s \langle x, y \rangle \right)^\frac{1}{s}.
\]
We let $\rho(t) = (1-st)_+^\frac{1}{2s}$. As $f \equiv 0$ outside $S_f$ and $f_{(s)}^{\circ} \equiv 0$ outside $\frac{S_f^\circ}{s} $,   the functions $f$ and $f_{(s)}^{\circ}$ satisfy the assumption \eqref{hypsantalo} with $z_0 = 0$ because $f$ is even. It follows from \eqref{concsantalo}
that 
\begin{equation}
\label{eq:Santalo2}
\left(\int f dx \right) \left( \int f_{(s)}^{\circ} dx \right) \leq \left( \int (1-s |x|^2)_+^\frac{1}{2s}dx \right) ^2= \left(\frac{\pi}{s}\right)^n  \frac{ \left(\Gamma(1+\frac{1}{2s})\right)^2}{ \left(\Gamma(1+\frac{n}{2}+\frac{1}{2s})\right)^2}.
\end{equation}
By Theorem \ref{s-generalduality}, we have $\int f_{(s)}^{\circ} = as_0^{(s)} (\psi_{(s)}^\star) = as_1^{(s)} (\psi)$ which means that
\begin{align*}
\int f_{(s)}^{\circ}  =  &
\frac{1}{1+ns}   \int_{X_\psi} \frac{ \det   \Hess  \psi (x) }
{\left(1+s(\langle x , \nabla \psi(x) \rangle -\psi(x))\right)^{\left(n+\frac{1}{s}\right) }} \  dx
\\ = & 
\frac{1}{1+ns}   \int_{X_\psi} \frac{ \det   \Hess  \psi (x) }
{\left(1+s(\langle x , \nabla \psi(x) \rangle -\psi(x))\right)^{\left(n+\frac{1}{s}\right) }} \  \frac{dx}{d\mu(x)} \ d\mu(x)
\end{align*}
Since $\int f = 1$, $\mu$ is a probability measure and we get from Jensen inequality
\begin{eqnarray*} 
\log\left( \int f^\circ_{(s)}\right)  &\geq& S(\mu)  - \log(1+ns) + \int \log \left(\det \Hess \psi\right)  d\mu  \nonumber \\
&-& \int \log \left( \left(1+s(\langle x, \nabla \psi(x) \rangle -\psi)\right)^{\frac{1}{s } +n } \right) d\mu.
\end{eqnarray*}
Therefore, with \eqref{eq:Santalo2}  and as $\int f dx  =1$, 
\begin{eqnarray*}
\int \log \left(\det \left(\Hess \psi\right)  \right) d\mu  \leq \int \log \left( \left(1+s(\langle x, \nabla \psi(x) \rangle -\psi)\right)^{\frac{1}{s } +n } \right) d\mu  -  S(\mu) \\
 \\+  \log(1+ns) 
+ \log \left( \left(\frac{\pi}{s}\right)^n \frac{\left(\Gamma(1+\frac{1}{2s})\right)^2}{\left(\Gamma(1+\frac{n}{2}+\frac{1}{2s})\right)^2} \right).
\end{eqnarray*}
When equality holds in (\ref{Ungleichung1000}), then in particular equality holds in the Blaschke Santal\'o inequality \eqref{eq:Santalo2}.
It was proved in \cite{FradeliziMeyer2007} that this
happens if and only if, in our situation,  $f(x) = c_0 \left(1-  s \left|Ax\right|^2\right)^\frac{1}{2s}$, for a positive definite matrix $A$ and where $c_0$  as above is  chosen such that $\int f dx =1$. 
On the other hand,  
it is easy to see that equality holds   in  (\ref{Ungleichung1000}), when  $f(x) = c \left(1-  s \left|Ax\right|^2\right)^\frac{1}{2s}$, for a positive definite matrix $A$ and a positive constant $c$.
\end{proof}

\noindent 
Umut Caglar
\\
{\small Department of Mathematics } \\
{\small Case Western Reserve University } \\
{\small Cleveland, Ohio 44106, U. S. A. } \\
{\small \tt umut.caglar@case.edu}\\ \\
\and
Matthieu Fradelizi and Olivier Gu\'edon
\\  
{\small Universit\'e Paris Est } \\
{\small Laboratoire d'Analyse et de Math\'ematiques Appliqu\'ees (UMR 8050)} \\
{\small UPEM, F-77454, Marne-la-Vall\'ee, France} \\
{\small \tt matthieu.fradelizi@u-pem.fr, olivier.guedon@u-pem.fr}\\ \\
\and
Joseph Lehec
\\
{\small Universit\'e Paris-Dauphine } \\
{\small CEREMADE } \\
{\small Place du Mar\'echal de Lattre de Tassigny,
75016 Paris, France } \\
{\small \tt  lehec@ceremade.dauphine.fr}\\ \\
\and
Carsten Sch\"utt\\
{\small Mathematisches Institut}\\
{\small Universit\"at Kiel}\\
{\small 24105 Kiel, Germany}\\
{\small \tt schuett@math.uni-kiel.de }\\ \\
\noindent
\and 
Elisabeth Werner\\
{\small Department of Mathematics \ \ \ \ \ \ \ \ \ \ \ \ \ \ \ \ \ \ \ Universit\'{e} de Lille 1}\\
{\small Case Western Reserve University \ \ \ \ \ \ \ \ \ \ \ \ \ UFR de Math\'{e}matique }\\
{\small Cleveland, Ohio 44106, U. S. A. \ \ \ \ \ \ \ \ \ \ \ \ \ \ \ 59655 Villeneuve d'Ascq, France}\\
{\small \tt elisabeth.werner@case.edu}\\ \\

\end{document}